\documentclass[]{interact}

\usepackage{epstopdf}
\usepackage[caption=false]{subfig}
\usepackage[numbers,sort&compress]{natbib}
\bibpunct{[}{]}{,}{n}{}{,}
\renewcommand\bibfont{\fontsize{10}{12}\selectfont}
\usepackage{mathrsfs}
\usepackage{amsmath,amsthm,amssymb}
\usepackage{mathtools}
\usepackage{hyperref}
\usepackage{cleveref}
\usepackage{booktabs}
\usepackage{array}
\usepackage{graphicx}
\usepackage{tikz}
\usepackage{float}
\usetikzlibrary{arrows.meta, positioning, fit, backgrounds}

\theoremstyle{plain}
\newtheorem{theorem}{Theorem}[section]
\newtheorem{lemma}[theorem]{Lemma}
\newtheorem{proposition}[theorem]{Proposition}
\newtheorem{corollary}[theorem]{Corollary}

\theoremstyle{definition}
\newtheorem{definition}[theorem]{Definition}
\newtheorem{example}[theorem]{Example}

\theoremstyle{remark}
\newtheorem{remark}[theorem]{Remark}
\newtheorem{observation}[theorem]{Observation}

\newcommand{\Crown}{\mathscr{C}}
\newcommand{\Skel}{\mathcal{L}}
\newcommand{\Tri}{\mathcal{T}}
\newcommand{\Znn}{\mathbb{Z}_{\geq 0}}
\newcommand{\Zpos}{\mathbb{Z}_{> 0}}

\begin{document}

\title{A Coordinate System for Collatz Dynamics}

\author{
\name{Jennifer Williams\thanks{Email: j.williams@soton.ac.uk, ORCID: \url{https://orcid.org/0000-0003-1410-0427}}}
\affil{School of Electronics and Computer Science\\ University of Southampton, United Kingdom}
}

\maketitle

\begin{abstract}
It is well-established that every odd positive integer $n$ can be written uniquely as $n = \lambda \cdot 2^a \cdot 3^b - 1$ where $\gcd(\lambda, 6) = 1$ and $a \geq 1$. Building from this 3-smooth factorization, we introduce a partition of the nonnegative integers into countably many infinite triangles where each row $k$ forms a \emph{Collatz chain} of alternating parity. The partition admits a coordinate system as a \emph{skeleton} $\Skel_\lambda$ using the pair $(a, b)$ for odd positive integers within a geometric structure where row $k$ corresponds to $k = a + b$. Each position $(a, b)$ maps to $(a-1, b+1)$, a deterministic diagonal flow requiring no number-theoretic input. At the boundary $a = 1$, the trajectory exits to another skeleton depending on the factorization of $\lambda \cdot 3^{b+1} - 1$. The coordinate system is new. As a concrete application, we prove that rows $k \equiv 2 \pmod 4$ with $k \geq 6$ in the principal skeleton $\Skel_1$ contain no primes, and show this is the \emph{unique} residue class admitting complete algebraic obstruction. Our contribution is the framework that makes visible which nonnegative integers these arguments apply to, with all results independent of the Collatz conjecture.
\end{abstract}

\begin{keywords}
3-smooth numbers, Collatz map, partition of integers, prime distribution, arithmetic dynamics
\end{keywords}

\section{Introduction}\label{sec:intro}

Despite almost a century of work on the Collatz problem, the geometric organization of its dynamics remains underexplored. Prior work on the problem is extensive, with key results spanning multiple areas such as density bounds~\cite{Applegate1995}, stochastic models~\cite{LagariasWeiss1992}, and cycle analysis~\cite{Knight2026}. A variety of geometric approaches to the Collatz problem have been developed. This paper introduces a coordinate system that differs from these approaches by organizing nonnegative integers via 3-smooth factorization.

The Collatz function
\[
C(n) = \begin{cases} n/2 & \text{if } n \equiv 0 \pmod 2, \\ 3n+1 & \text{if } n \equiv 1 \pmod 2 \end{cases}
\]
was introduced by Lothar Collatz in 1937 and is surveyed in~\cite{Lagarias2010}. We write $C$ for this full map throughout. Two related maps appear in the literature. The accelerated map $T(n) = (3n+1)/2$ for odd $n$ (and $T(n) = n/2$ for even $n$) removes a single factor of 2 after each odd step. The Syracuse function $S(n) = (3n+1)/2^{v_2(3n+1)}$ on odd $n$ removes all of them at once, mapping each odd number directly to the next. Our results concern $C$. The conjecture that iteration from any positive integer $n$ eventually reaches 1 remains open despite computational verification to $2^{68}$~\cite{Barina2021} and Tao's breakthrough~\cite{Tao2022} showing that almost all orbits attain almost bounded values, building on earlier density results of Terras \cite{terras1976stopping}.

The coordinate system introduced in this paper organizes Collatz dynamics geometrically as triangles and arises from computational exploration. Examining Collatz orbits, we identify maximal subsequences with strictly alternating parity. Unlike typical orbits, which may apply $n/2$ multiple times consecutively, these subsequences alternate parity at every step and cannot be extended further while preserving this alternation. We call them \emph{Collatz chains} and give a formal definition in Section~\ref{sec:partition}. We show that the specific arrangement of such chains illuminates a geometric structure that aligns exactly to known prior work on the Collatz problem. 

Computational enumeration for $n \leq 10^7$ reveals that every natural number belongs to exactly one Collatz chain, and the distribution of chain lengths follows a geometric law (see Figure~\ref{fig:chain-lengths}). Investigating this regularity leads us to discover that the chains organize into countably many infinite triangles. We call these \emph{crown triangles} because each is seeded by a single element at row $k=0$, which we call the \emph{crown}. The odd part of each crown triangle forms an \emph{odd skeleton}. The skeleton structure admits a closed-form description involving 3-smooth numbers, and in turn yields proofs of nontrivial theorems, thereby connecting this geometric arrangement to prior work on the Collatz problem.

\begin{figure}[ht]
\centering
\includegraphics[width=\textwidth]{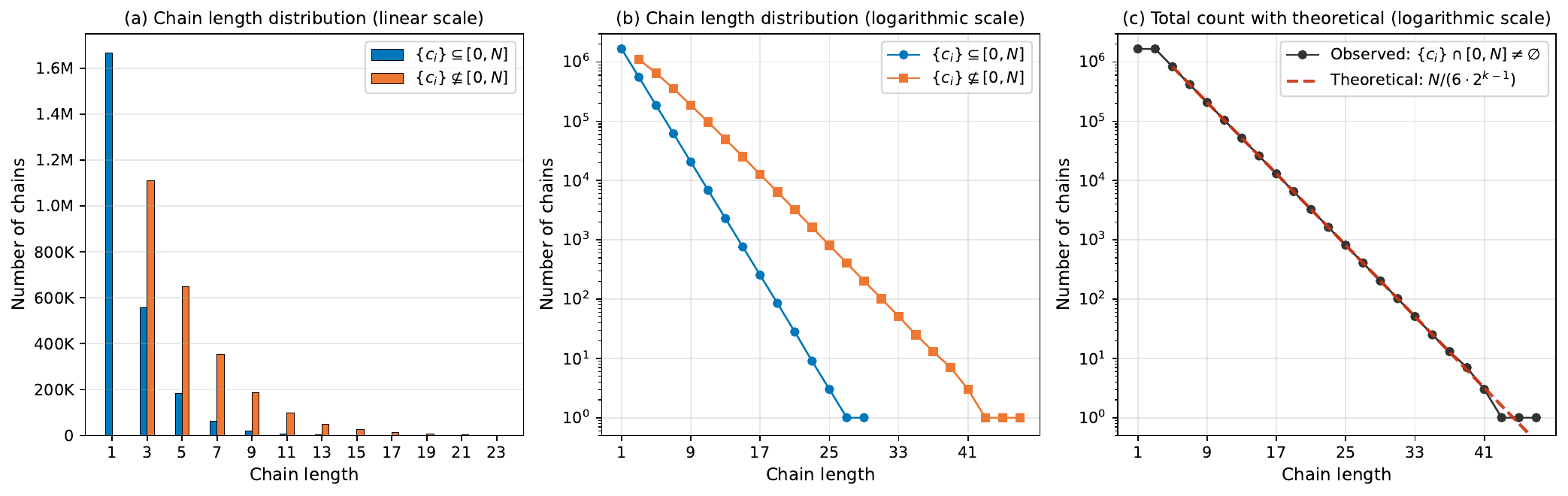}
\caption{Distribution of Collatz chain lengths for $n \in \{0, 1, \ldots, N\}$ with $N = 10^7$. Chains are categorized as $\{c_i\} \subseteq [0, N]$ (all elements $\leq N$) or $\{c_i\} \not\subseteq [0, N]$ (at least one element $> N$). (a) Linear scale showing chains of length 1, 3, 5, \ldots, 23. (b) Logarithmic scale showing all observed chain lengths from 1 to 47. (c) Total count of chains intersecting $[0, N]$ compared with theoretical prediction $N/(6 \cdot 2^{k-1})$.}
\label{fig:chain-lengths}
\end{figure}

The key idea is a coordinate system for nonnegative odd integers. 3-smooth factorization admits that every odd positive integer $n$ can be written uniquely as
\[
n = \lambda \cdot 2^a \cdot 3^b - 1
\]
where $\gcd(\lambda, 6) = 1$, $a \geq 1$, and $b \geq 0$, which follows from writing $n + 1 = 2^a \cdot 3^b \cdot \lambda$ and extracting the powers of 2 and 3. This elementary factorization links to our geometric structures (crown triangles, odd skeletons) as coordinates. The parameter $\lambda$ indexes which odd skeleton a nonnegative integer belongs to. The pair $(a, b)$ gives coordinates within the odd skeleton triangle, and the sum $k = a + b$ gives the row.

\subsection{Summary of Results}

Our main contributions are the following.

The nonnegative integers partition into crown triangles $\{\Tri_c : c \in \Crown\}$, indexed by crowns $c \equiv 0$ or $8 \pmod{12}$. Each row of each triangle is a Collatz chain. This is Theorem~\ref{thm:partition}.

The odd elements of each triangle form an odd skeleton $\Skel_\lambda$ consisting of integers $\lambda \cdot 2^a \cdot 3^b - 1$ with $a \geq 1$ and $b \geq 0$. The parameter $\lambda = (c+2)/2$ ranges over positive integers coprime to 6. This is Proposition~\ref{prop:skeleton}.

Within each skeleton, the Collatz map sends position $(a, b)$ to $(a-1, b+1)$ until reaching the boundary $a = 1$. The row index $k = a + b$ is preserved. This diagonal flow is completely deterministic. This is Theorem~\ref{thm:diagonal}.

The interior of each skeleton, where $a \geq 2$, requires no number-theoretic input. The 2-adic valuation satisfies $v_2(3n+1) = 1$ always. At the boundary $a = 1$, the 2-adic valuation depends on the factorization of $\lambda \cdot 3^{b+1} - 1$, and the trajectory exits to another skeleton. The number-theoretic difficulty of Collatz is concentrated at these boundary transitions.

In the principal skeleton $\Skel_1$, rows $k \equiv 2 \pmod 4$ with $k \geq 6$ contain no primes. The proof uses divisibility by 5 and difference-of-squares factorization. This is Theorem~\ref{thm:zero-prime}. The Complete Covering Theorem (Theorem~\ref{thm:complete-covering}) explains why this residue class is special. It is the unique class modulo 4 where every position admits an algebraic obstruction to primality.

\subsection{What Is New}
The organization of nonnegative odd integers by the pair $(a, b)$ from the factorization $n + 1 = \lambda \cdot 2^a \cdot 3^b$, together with the arrangement by weight $k = a + b$ into rows whose elements are Collatz chains, appears to be new. The individual ingredients are not new. The factorization itself is elementary. There is substantial prior work on 3-smooth numbers \cite{Blecksmith1998}. And the family $\{2^a \cdot 3^b - 1\}$ has been studied as the Pierpont numbers of the second kind \cite{Guy2004}. Our contribution is specifically the synthesis of the coordinate system that links this factorization to the Collatz dynamics, ultimately making certain patterns more visible.

The divisibility arguments underlying Theorem~\ref{thm:zero-prime} are also elementary. The Complete Covering Theorem shows that these arguments completely characterize the algebraically prime-free rows in $\Skel_1$. The result is stated and proved in coordinates that did not previously exist.

The partition of nonnegative integers presented in Theorem~\ref{thm:partition} is independent of the Collatz conjecture. This structural result holds whether or not every Collatz orbit reaches 1. 

\subsection{Relationship to Prior Work}
Several strands of prior work relate to the framework presented here.

\subsubsection{Geometric and structural approaches to Collatz}
The Collatz problem admits several geometric perspectives. The Collatz graph, where nonnegative integers are nodes and $C$ defines directed edges, has been studied extensively. Wirsching's monograph~\cite{Wirsching1998} develops its tree structure in detail, and related tree-based analyses underpin the density bounds of Applegate and Lagarias~\cite{Applegate1995}. A distinct line of work organizes integers by residue class modulo small numbers to track Collatz dynamics. Kannan and Moorthy~\cite{Kannan2016} analyze orbits modulo an integer, and Ren~\cite{Ren2023} gives a regular partition by residue class. Knight~\cite{Knight2026} uses Christoffel words to characterize the parity vector of a high cycle, giving a combinatorial-geometric criterion for cycle existence. The crown triangle framework differs from these approaches in two respects. First, it organizes integers by the 3-smooth factorization of $n + 1$ rather than by residues of $n$ itself, so that the parameter $k = a + b$ corresponds directly to the length of a Collatz chain. Second, the skeleton coordinates $(a, b)$ render the deterministic portion of Collatz dynamics as diagonal lattice translation, isolating the number-theoretic content at the boundary $a = 1$. The relationship between Knight's parity vector framework and paths in the skeleton lattice appears especially worth pursuing. Odd steps of the Collatz function correspond to diagonal moves $(a, b) \mapsto (a - 1, b + 1)$, and boundary exits correspond to transitions between skeletons. Altogether, the coordinate system admits a 3-dimensional geometric structure ($\lambda, a, b$) that to our knowledge has not been examined in this form.

\subsubsection{3-smooth numbers and the Pierpont family}
The skeleton $\Skel_1$ enumerates nonnegative integers one less than a 3-smooth number. Primes in this family are the Pierpont primes of the second kind~\cite{Guy2004}, and include Mersenne primes (column $b = 0$) and Thabit primes (column $b = 1$) as classical subfamilies. Blecksmith et al.~\cite{Blecksmith1998} studied the combinatorics of 3-smooth representations of integers. The crown framework organizes the Pierpont family by weight $a + b$, which is the natural invariant for Collatz chain length but does not appear to have been used as an organizing principle for this family previously. Prime distribution questions for thin sequences of this form are governed by the Bateman-Horn heuristic~\cite{BatemanHorn1962}, which predicts a positive density of primes along generic arithmetic progressions within the Pierpont family. Theorem~\ref{thm:zero-prime} identifies the residue class where this heuristic is obstructed by elementary divisibility.

\subsubsection{Growth rates and $(3/2)$-dynamics}
Through our coordinate system, a diagonal flow can be found. The diagonal flow $(a, b) \mapsto (a - 1, b + 1)$ multiplies $n + 1$ by a factor of $3/2$ at each step. This factor connects the interior skeleton dynamics notationally to the $\beta$-transformation $x \mapsto (3/2)x \bmod 1$ studied by Flatto~\cite{Flatto1992} and to Mahler's work on powers of $3/2$~\cite{Mahler1968}. We note the correspondence here as a structural feature rather than a technical tool. Whether the skeleton framework yields genuinely new leverage on $(3/2)$-dynamics is an open question.

\subsubsection{The Syracuse function}
Recall the Syracuse function $S(n) = (3n + 1) / 2^{v_2(3n+1)}$~\cite{Applegate1995}. The 2-adic valuation $v_2(3n+1)$ is the Syracuse coefficient, and it controls the multiplicative factor by which each odd-to-odd step changes the value of $n$. Corollary~\ref{cor:valuation} shows that interior skeleton positions within our coordinate system (those with $a \geq 2$) achieve $v_2(3n+1) = 1$, the minimum possible value. The skeleton interior is therefore precisely where Syracuse dynamics are as contractive as possible, and the boundary $a = 1$ is where higher Syracuse coefficients occur.

\section{The Crown Triangle Partition}\label{sec:partition}

We present the crown triangle structure algebraically, deriving modular properties as consequences. The mod 12 structure of crowns connects to prior work on Collatz behavior in residue classes \cite{Kannan2016,Ren2023}.

\begin{definition}[Collatz chain]\label{def:collatz-chain}
A \emph{Collatz chain} is a finite sequence $(n_0, n_1, \ldots, n_m)$ of nonnegative integers satisfying $C(n_i) = n_{i+1}$ for $0 \leq i < m$, with $n_i$ and $n_{i+1}$ of opposite parity for all such $i$, and which is maximal in the sense that extending the sequence by one step in either direction would violate the parity alternation property.
\end{definition}

\subsection{Crowns and the $\lambda$-Parameter}

\begin{definition}[Crown]
A nonnegative integer $c$ is a \emph{crown} if $c \equiv 0$ or $8 \pmod{12}$. The set of crowns is
\[
\Crown = \{0, 8, 12, 20, 24, 32, 36, 44, 48, \ldots\}.
\]
For each crown $c$, define the $\lambda$-parameter by $\lambda(c) = (c+2)/2$.
\end{definition}

\begin{proposition}[$\lambda$-bijection]\label{prop:lambda-bijection}
The map $c \mapsto \lambda(c)$ is a bijection between crowns and positive integers coprime to 6:
\[
\{\lambda(c) : c \in \Crown\} = \{n \in \Zpos : \gcd(n, 6) = 1\} = \{1, 5, 7, 11, 13, 17, 19, 23, \ldots\}.
\]
\end{proposition}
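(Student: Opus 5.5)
The plan is to check three things in turn: that $\lambda(c)$ is a positive integer coprime to $6$, that the map is injective, and that it is surjective onto $\{n \in \Zpos : \gcd(n,6)=1\}$. The argument is a direct residue computation modulo $12$, pushed through the affine map $c \mapsto (c+2)/2$.

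For well-definedness, take a crown $c \in \Crown$ and write $c = 12m$ or $c = 12m + 8$ with $m \in \Znn$. In the first case $\lambda(c) = (12m+2)/2 = 6m+1$. In the second case $\lambda(c) = (12m+10)/2 = 6m+5$. In both cases $\lambda(c)$ is a positive integer, and since it is congruent to $1$ or $5 \pmod 6$, it is odd and not divisible by $3$. Hence $\gcd(\lambda(c),6)=1$.

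Injectivity is immediate: $c \mapsto (c+2)/2$ is the restriction of a strictly increasing affine map, so it is injective on all of $\Znn$, and in particular on $\Crown$.

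For surjectivity, take a positive integer $n$ with $\gcd(n,6)=1$. Then $n \equiv 1$ or $5 \pmod 6$, so we can write $n = 6m+1$ or $n = 6m+5$ with $m \in \Znn$; positivity gives $m \geq 0$ in both cases. Set $c = 2n - 2$, which is the only possible preimage under the affine map. This gives $c = 12m \equiv 0 \pmod{12}$ or $c = 12m+8 \equiv 8 \pmod{12}$, and in either case $c \geq 0$, so $c \in \Crown$ and $\lambda(c) = n$.

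Finally, the explicit list $\{1,5,7,11,\ldots\}$ follows by substituting $m = 0,1,2,\ldots$ into $6m+1$ and $6m+5$, and it matches the image of $\{0,8,12,20,\ldots\}$ term by term. None of these steps is a genuine obstacle. The only point that needs care is the boundary case $n=1$: we must confirm that its preimage $c=0$ is admitted as a crown, which it is, since the definition of $\Crown$ includes $0$ as a nonnegative integer with $0 \equiv 0 \pmod{12}$.
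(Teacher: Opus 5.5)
Your proof is correct and matches the paper's argument: you compute residues so that $c \equiv 0, 8 \pmod{12}$ gives $\lambda \equiv 1, 5 \pmod 6$, and you get injectivity from $c = 2\lambda - 2$. Your surjectivity step, which sets $c = 2n-2$ and checks that it lies in $\Crown$ (including $n=1 \mapsto c=0$), spells out what the paper leaves implicit in its remark that every positive integer coprime to $6$ is congruent to $1$ or $5 \pmod 6$.
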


\begin{proof}
If $c \equiv 0 \pmod{12}$, then $c + 2 \equiv 2 \pmod{12}$, so $\lambda = (c+2)/2 \equiv 1 \pmod{6}$. If $c \equiv 8 \pmod{12}$, then $c + 2 \equiv 10 \pmod{12}$, so $\lambda = (c+2)/2 \equiv 5 \pmod{6}$. Every positive integer coprime to 6 is congruent to 1 or 5 modulo 6. The map is injective since $c = 2\lambda - 2$ is determined by $\lambda$.
\end{proof}

The first few correspondences are: $c = 0 \leftrightarrow \lambda = 1$, $c = 8 \leftrightarrow \lambda = 5$, $c = 12 \leftrightarrow \lambda = 7$, $c = 20 \leftrightarrow \lambda = 11$.

\subsection{The Residual Triangle}

The residual triangle is a template of 3-smooth numbers that underlies all crown triangles.

\begin{definition}[Residual triangle]
For integers $k \geq 0$ and $0 \leq j \leq 2k$, define
\[
R(k, j) = 2^{k - \lfloor j/2 \rfloor} \cdot 3^{\lfloor j/2 \rfloor}.
\]
Row $k$ of $R$ contains $2k+1$ elements at positions $j = 0, 1, \ldots, 2k$.
\end{definition}

The elements of $R$ are 3-smooth numbers organized by weight, where the weight of $2^a \cdot 3^b$ is $a + b$. Row $k$ contains elements of weight $k$.

\begin{example}\label{ex:residual}
The first rows of $R$ are:
\begin{center}
\begin{tabular}{c|ccccccccccc}
$k$ & \multicolumn{11}{c}{$R(k,j)$ for $j = 0, 1, \ldots, 2k$} \\
\midrule
0 & 1 \\
1 & 2 & 2 & 3 \\
2 & 4 & 4 & 6 & 6 & 9 \\
3 & 8 & 8 & 12 & 12 & 18 & 18 & 27 \\
4 & 16 & 16 & 24 & 24 & 36 & 36 & 54 & 54 & 81
\end{tabular}
\end{center}
Consecutive positions $j$ and $j+1$ share the same $R$ value when $j$ is even.
\end{example}

\begin{lemma}[Parity structure of $R$]\label{lem:R-parity}
For all valid $j$:
\begin{enumerate}
\item If $j$ is even, then $R(k, j) = 2^{k - j/2} \cdot 3^{j/2}$.
\item If $j$ is odd, then $R(k, j) = R(k, j-1)$.
\item $R(k, j+1) = \frac{3}{2} R(k, j)$ when $j$ is odd.
\end{enumerate}
\end{lemma}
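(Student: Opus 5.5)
All three parts follow by direct substitution into the definition $R(k,j) = 2^{k-\lfloor j/2\rfloor}\cdot 3^{\lfloor j/2\rfloor}$. The only fact needed is how the floor behaves on even and odd arguments: if $j = 2m$, then $\lfloor j/2\rfloor = m = j/2$; if $j = 2m+1$, then $\lfloor j/2\rfloor = m = (j-1)/2$. Throughout, $j$ ranges over $0 \le j \le 2k$, and in parts 2 and 3 the neighbouring index $j\pm1$ also lies in this range.

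For part 1, take $j$ even. Then $\lfloor j/2\rfloor = j/2$, and substituting into the definition gives $R(k,j) = 2^{k-j/2}\cdot 3^{j/2}$ at once.

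For part 2, take $j$ odd, so $j \ge 1$ and $j-1 \ge 0$ is even. Both indices have the same floor, $\lfloor j/2\rfloor = \lfloor (j-1)/2\rfloor = (j-1)/2$. The two defining expressions therefore coincide, and $R(k,j) = R(k,j-1)$.

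For part 3, take $j$ odd with $j+1 \le 2k$. This bound is automatic, since $j$ is odd and $j \le 2k$ force $j \le 2k-1$. Write $m = (j-1)/2$, so that $\lfloor j/2\rfloor = m$ and $\lfloor (j+1)/2\rfloor = m+1$. Then
\[
R(k,j+1) = 2^{k-m-1}\cdot 3^{m+1} = \tfrac{3}{2}\cdot 2^{k-m}\cdot 3^{m} = \tfrac{3}{2}R(k,j).
\]
Here $k-m-1 \ge 0$ because $2m+2 = j+1 \le 2k$, so all exponents are nonnegative integers.

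No step is a genuine obstacle. The only point needing care is the range bookkeeping: in part 2 we need $j-1 \ge 0$, and in part 3 we need $j+1 \le 2k$, which keeps the exponent of $2$ nonnegative.
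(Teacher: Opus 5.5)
Your proposal is correct and follows the same route as the paper's proof: substitute into the definition using $\lfloor j/2\rfloor = j/2$ for even $j$ and $\lfloor j/2\rfloor = (j-1)/2$ for odd $j$, then read off the ratio $2^{-1}\cdot 3 = 3/2$. Your range checks, $j-1\ge 0$ and $j+1\le 2k$ (so the exponent of $2$ stays nonnegative), are a small addition that the paper leaves implicit.
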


\begin{proof}
For even $j$, we have $\lfloor j/2 \rfloor = j/2$, giving $R(k,j) = 2^{k-j/2} \cdot 3^{j/2}$.

For odd $j = 2m+1$, we have $\lfloor j/2 \rfloor = m$, so $R(k,j) = 2^{k-m} \cdot 3^m$. At $j-1 = 2m$, we have $\lfloor (j-1)/2 \rfloor = m$, giving $R(k, j-1) = 2^{k-m} \cdot 3^m = R(k,j)$.

For odd $j$, the next position $j+1$ is even, so $R(k, j+1) = 2^{k - (j+1)/2} \cdot 3^{(j+1)/2}$. Since $R(k,j) = 2^{k - (j-1)/2} \cdot 3^{(j-1)/2}$, we have $R(k, j+1)/R(k, j) = 2^{-1} \cdot 3 = 3/2$.
\end{proof}

\subsection{Crown Triangles}

\begin{definition}[Crown triangle]\label{def:crown-triangle}
For a crown $c$ with $\lambda = \lambda(c)$, define the parity multiplier $g(j) = 2$ if $j$ is even and $g(j) = 1$ if $j$ is odd. The crown triangle $\Tri_c$ has element at position $(k, j)$ given by
\[
\Tri_c(k, j) = g(j) \cdot (\lambda \cdot R(k, j) - 1).
\]
Row $k$ contains positions $j = 0, 1, \ldots, 2k$.
\end{definition}

The parity multiplier ensures that even positions contain even integers and odd positions contain odd integers.

\begin{figure}[ht]
\centering
\begin{tikzpicture}[scale=0.55, every node/.style={font=\footnotesize}]
  
  \begin{scope}[xshift=-7cm]
    \node at (0, 5.2) {\textbf{$\Tri_0$ (crown $\equiv 0 \mod 12$)}};
    
    \node[circle, draw, dashed, thick, fill=gray!10, minimum size=6mm, inner sep=1pt] (A-c0) at (0, 3.5) {0};
    
    \node[circle, draw, fill=blue!20, minimum size=6mm, inner sep=1pt] (A-l1) at (-1.5, 2) {4};
    \node[circle, draw, minimum size=6mm, inner sep=1pt] (A-m1) at (0, 2) {\textbf{1}};
    \node[circle, draw, fill=orange!30, minimum size=6mm, inner sep=1pt] (A-r1) at (1.5, 2) {2};
    
    \node[circle, draw, fill=blue!20, minimum size=6mm, inner sep=1pt] (A-l2) at (-3, 0.5) {16};
    \node[circle, draw, minimum size=6mm, inner sep=1pt] (A-m2a) at (-1.5, 0.5) {\textbf{5}};
    \node[circle, draw, minimum size=6mm, inner sep=1pt] (A-m2b) at (0, 0.5) {10};
    \node[circle, draw, minimum size=6mm, inner sep=1pt] (A-m2c) at (1.5, 0.5) {\textbf{3}};
    \node[circle, draw, fill=orange!30, minimum size=6mm, inner sep=1pt] (A-r2) at (3, 0.5) {6};
    
    \node[circle, draw, fill=blue!20, minimum size=6mm, inner sep=1pt] (A-l3) at (-4.5, -1) {52};
    \node[circle, draw, minimum size=6mm, inner sep=1pt] (A-m3a) at (-3, -1) {\textbf{17}};
    \node[circle, draw, minimum size=6mm, inner sep=1pt] (A-m3b) at (-1.5, -1) {34};
    \node[circle, draw, minimum size=6mm, inner sep=1pt] (A-m3c) at (0, -1) {\textbf{11}};
    \node[circle, draw, minimum size=6mm, inner sep=1pt] (A-m3d) at (1.5, -1) {22};
    \node[circle, draw, minimum size=6mm, inner sep=1pt] (A-m3e) at (3, -1) {\textbf{7}};
    \node[circle, draw, fill=orange!30, minimum size=6mm, inner sep=1pt] (A-r3) at (4.5, -1) {14};
    
    \draw[thick, blue!60] (A-c0) -- (A-l1) -- (A-l2) -- (A-l3);
    \draw[thick, orange!70] (A-c0) -- (A-r1) -- (A-r2) -- (A-r3);
    \draw[thick, blue!60, ->, dashed]   (A-l3) -- ++(-1.2,-1.2);
    \draw[thick, orange!70, ->, dashed] (A-r3) -- ++(1.2,-1.2);
    \draw[->, gray, semithick] (A-r1) -- (A-m1);
    \draw[->, gray, semithick] (A-m1) -- (A-l1);
    
    \draw[->, gray, semithick] (A-r2) -- (A-m2c);
    \draw[->, gray, semithick] (A-m2c) -- (A-m2b);
    \draw[->, gray, semithick] (A-m2b) -- (A-m2a);
    \draw[->, gray, semithick] (A-m2a) -- (A-l2);
    
    \draw[->, gray, semithick] (A-r3) -- (A-m3e);
    \draw[->, gray, semithick] (A-m3e) -- (A-m3d);
    \draw[->, gray, semithick] (A-m3d) -- (A-m3c);
    \draw[->, gray, semithick] (A-m3c) -- (A-m3b);
    \draw[->, gray, semithick] (A-m3b) -- (A-m3a);
    \draw[->, gray, semithick] (A-m3a) -- (A-l3);
  \end{scope}
  
  \begin{scope}[xshift=7cm]
    \node at (0, 5.2) {\textbf{$\Tri_8$ (crown $\equiv 8 \mod 12$)}};
    
    \node[circle, draw, dashed, thick, fill=gray!10, minimum size=6mm, inner sep=1pt] (B-c0) at (0, 3.5) {8};
    
    \node[circle, draw, fill=blue!20, minimum size=6mm, inner sep=1pt] (B-l1) at (-1.5, 2) {28};
    \node[circle, draw, minimum size=6mm, inner sep=1pt] (B-m1) at (0, 2) {\textbf{9}};
    \node[circle, draw, fill=orange!30, minimum size=6mm, inner sep=1pt] (B-r1) at (1.5, 2) {18};
    
    \node[circle, draw, fill=blue!20, minimum size=6mm, inner sep=1pt] (B-l2) at (-3, 0.5) {88};
    \node[circle, draw, minimum size=6mm, inner sep=1pt] (B-m2a) at (-1.5, 0.5) {\textbf{29}};
    \node[circle, draw, minimum size=6mm, inner sep=1pt] (B-m2b) at (0, 0.5) {58};
    \node[circle, draw, minimum size=6mm, inner sep=1pt] (B-m2c) at (1.5, 0.5) {\textbf{19}};
    \node[circle, draw, fill=orange!30, minimum size=6mm, inner sep=1pt] (B-r2) at (3, 0.5) {38};
    
    \node[circle, draw, fill=blue!20, minimum size=6mm, inner sep=1pt] (B-l3) at (-4.5, -1) {268};
    \node[circle, draw, minimum size=6mm, inner sep=1pt] (B-m3a) at (-3, -1) {\textbf{89}};
    \node[circle, draw, minimum size=6mm, inner sep=1pt] (B-m3b) at (-1.5, -1) {178};
    \node[circle, draw, minimum size=6mm, inner sep=1pt] (B-m3c) at (0, -1) {\textbf{59}};
    \node[circle, draw, minimum size=6mm, inner sep=1pt] (B-m3d) at (1.5, -1) {118};
    \node[circle, draw, minimum size=6mm, inner sep=1pt] (B-m3e) at (3, -1) {\textbf{39}};
    \node[circle, draw, fill=orange!30, minimum size=6mm, inner sep=1pt] (B-r3) at (4.5, -1) {78};
    
    \draw[thick, blue!60] (B-c0) -- (B-l1) -- (B-l2) -- (B-l3);
    \draw[thick, orange!70] (B-c0) -- (B-r1) -- (B-r2) -- (B-r3);
    \draw[thick, blue!60, ->, dashed]   (B-l3) -- ++(-1.2,-1.2);
    \draw[thick, orange!70, ->, dashed] (B-r3) -- ++(1.2,-1.2);
    \draw[->, gray, semithick] (B-r1) -- (B-m1);
    \draw[->, gray, semithick] (B-m1) -- (B-l1);
    
    \draw[->, gray, semithick] (B-r2) -- (B-m2c);
    \draw[->, gray, semithick] (B-m2c) -- (B-m2b);
    \draw[->, gray, semithick] (B-m2b) -- (B-m2a);
    \draw[->, gray, semithick] (B-m2a) -- (B-l2);
    
    \draw[->, gray, semithick] (B-r3) -- (B-m3e);
    \draw[->, gray, semithick] (B-m3e) -- (B-m3d);
    \draw[->, gray, semithick] (B-m3d) -- (B-m3c);
    \draw[->, gray, semithick] (B-m3c) -- (B-m3b);
    \draw[->, gray, semithick] (B-m3b) -- (B-m3a);
    \draw[->, gray, semithick] (B-m3a) -- (B-l3);
  \end{scope}
  
\end{tikzpicture}
\caption{Crown triangles from the two crown families. Left: $\Tri_0$ with crown $\equiv 0 \mod 12$. Right: $\Tri_8$ with crown $\equiv 8 \mod 12$. Crowns are marked with dashed borders and grey fill. Orange edges (right side of each triangle) show recurrence $2n+2$; Blue edges (left side) show recurrence $3n+4$. Odd elements are shown in bold. Gray arrows indicate Collatz function flow along each row. Triangles are infinite and extend indefinitely.}
\label{fig:triangles}
\end{figure}

\begin{remark}
The apex element is the crown itself. At position $(0, 0)$ we have $R(0, 0) = 1$ and $g(0) = 2$, so $\Tri_c(0, 0) = 2(\lambda - 1) = 2\lambda - 2 = c$.
\end{remark}

\begin{example}\label{ex:triangles}
For crown $c = 0$ with $\lambda = 1$:
\begin{center}
\begin{tabular}{c|ccccccccccc}
$k$ & \multicolumn{11}{c}{$\Tri_0(k,j)$} \\
\midrule
0 & 0 \\
1 & 2 & 1 & 4 \\
2 & 6 & 3 & 10 & 5 & 16 \\
3 & 14 & 7 & 22 & 11 & 34 & 17 & 52 \\
4 & 30 & 15 & 46 & 23 & 70 & 35 & 106 & 53 & 160
\end{tabular}
\end{center}
For crown $c = 8$ with $\lambda = 5$:
\begin{center}
\begin{tabular}{c|ccccccccccc}
$k$ & \multicolumn{11}{c}{$\Tri_8(k,j)$} \\
\midrule
0 & 8 \\
1 & 18 & 9 & 28 \\
2 & 38 & 19 & 58 & 29 & 88 \\
3 & 78 & 39 & 118 & 59 & 178 & 89 & 268
\end{tabular}
\end{center}
\end{example}

\begin{proposition}[Parity of elements]\label{prop:parity}
$\Tri_c(k, j)$ is even if and only if $j$ is even.
\end{proposition}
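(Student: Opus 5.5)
The plan is to split on the parity of $j$ and read off parity directly from Definition~\ref{def:crown-triangle}, $\Tri_c(k,j) = g(j)\cdot(\lambda R(k,j) - 1)$. Both cases use the same two inputs: $\lambda = \lambda(c)$ is odd, since it is coprime to $6$ by Proposition~\ref{prop:lambda-bijection}, and $R(k,j)$ is a positive integer, hence $\lambda R(k,j)-1$ is a nonnegative integer.

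\emph{Even $j$.} Here $g(j)=2$, so $\Tri_c(k,j) = 2(\lambda R(k,j)-1)$. This is twice an integer, hence even, with no further information needed.

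\emph{Odd $j$.} Here $g(j)=1$, so $\Tri_c(k,j) = \lambda R(k,j) - 1$, and we need $\lambda R(k,j)$ to be even. By Lemma~\ref{lem:R-parity}, writing $j = 2m+1$ gives $R(k,j) = 2^{k-m}3^m$. Since $0 \le j \le 2k$ and $j$ is odd, we have $j \le 2k-1$, so $m \le k-1$ and the exponent of $2$ satisfies $k-m \ge 1$. Thus $R(k,j)$ is even, so $\lambda R(k,j)$ is even and $\Tri_c(k,j)$ is odd.

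Combining the two cases gives the equivalence: even $j$ yields an even entry and odd $j$ an odd entry. The only step needing care is the bound $k - \lfloor j/2\rfloor \ge 1$ for odd $j$. It fails at the extreme even position $j = 2k$, where $R(k,2k)=3^k$ is odd, but that position has even $j$ and is covered by the factor $g(j)=2$. This is exactly the boundary case the parity multiplier is designed to handle, so I expect no genuine obstacle.
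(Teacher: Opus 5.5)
Your proof is correct and matches the paper's argument: the factor $g(j)=2$ handles even $j$, and for odd $j=2m+1$ the bound $m\le k-1$ makes $R(k,j)=2^{k-m}3^m$ even, so $\lambda R(k,j)-1$ is odd. The oddness of $\lambda$ that you list as an input is never actually used, since the evenness of $R(k,j)$ alone already makes $\lambda R(k,j)$ even.
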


\begin{proof}
For even $j$, we have $g(j) = 2$, so $\Tri_c(k, j) = 2(\lambda R(k, j) - 1)$, which is even.

For odd $j$, we have $g(j) = 1$, so $\Tri_c(k, j) = \lambda R(k, j) - 1$. Write $j = 2m + 1$ where $0 \leq m \leq k - 1$. Then $R(k, j) = 2^{k-m} \cdot 3^m$. Since $m \leq k - 1$, we have $k - m \geq 1$, so $R(k, j)$ is even. Thus $\lambda R(k, j)$ is even, and $\lambda R(k, j) - 1$ is odd.
\end{proof}

\subsection{The Row-Walking Theorem}

The Collatz function traverses each row sequentially.

\begin{theorem}[Row-Walking]\label{thm:row-walking}
For any crown $c$ and $k \geq 1$, the Collatz function satisfies
\[
C(\Tri_c(k, j)) = \Tri_c(k, j+1) \quad \text{for } 0 \leq j < 2k.
\]
\end{theorem}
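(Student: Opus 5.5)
The proof is a direct computation, split by the parity of $j$. Proposition~\ref{prop:parity} tells us which branch of $C$ applies, and Lemma~\ref{lem:R-parity} relates $R(k,j)$ to $R(k,j+1)$. Throughout, write $\lambda = \lambda(c)$.

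\emph{Case $j$ even.} By Proposition~\ref{prop:parity}, $\Tri_c(k,j) = 2(\lambda R(k,j) - 1)$ is even, so
\[
C(\Tri_c(k,j)) = \lambda R(k,j) - 1 .
\]
Since $j < 2k$, the index $j+1 \le 2k$ is valid and odd. Part~2 of Lemma~\ref{lem:R-parity}, applied at the odd index $j+1$, gives $R(k,j+1) = R(k,j)$. With $g(j+1) = 1$ this yields
\[
\Tri_c(k,j+1) = \lambda R(k,j+1) - 1 = \lambda R(k,j) - 1,
\]
which matches $C(\Tri_c(k,j))$.

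\emph{Case $j$ odd.} Here $\Tri_c(k,j) = \lambda R(k,j) - 1$ is odd by Proposition~\ref{prop:parity}, so
\[
C(\Tri_c(k,j)) = 3\lambda R(k,j) - 2 = 2\Bigl(\tfrac{3}{2}\lambda R(k,j) - 1\Bigr).
\]
Since $j$ is odd and $j < 2k$, we have $j+1 \le 2k$ and $j+1$ is even. Part~3 of Lemma~\ref{lem:R-parity} gives $R(k,j+1) = \tfrac{3}{2}R(k,j)$, and $g(j+1) = 2$. Hence
\[
\Tri_c(k,j+1) = 2\bigl(\lambda \cdot \tfrac{3}{2} R(k,j) - 1\bigr) = 3\lambda R(k,j) - 2,
\]
as required.

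The one point needing care is in the odd case: we must check that $\tfrac{3}{2}R(k,j)$ is an integer and that $j+1$ stays in range. The first holds because $R(k,j)$ is even for odd $j \le 2k-1$, as shown in the proof of Proposition~\ref{prop:parity}. The second is exactly where the hypothesis $j < 2k$ is used. The assumption $k \ge 1$ only ensures that the row contains some index $j < 2k$. Beyond these checks the argument is routine bookkeeping with the floor functions, already handled in Lemma~\ref{lem:R-parity}.
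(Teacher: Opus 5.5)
Your proposal is correct and follows the paper's proof essentially verbatim. You split on the parity of $j$ and use Lemma~\ref{lem:R-parity} to get $R(k,j+1)=R(k,j)$ in the even case and $R(k,j+1)=\tfrac{3}{2}R(k,j)$ in the odd case, with $g(j+1)$ supplying the factor of $1$ or $2$; your extra checks on $j+1\le 2k$ and on integrality are harmless details the paper leaves implicit.
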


\begin{proof}
We handle two cases based on the parity of $j$.

\textbf{Case 1: $j$ even.} Then $\Tri_c(k, j) = 2(\lambda R(k, j) - 1)$ is even. Applying $C$:
\[
C(\Tri_c(k, j)) = \frac{\Tri_c(k, j)}{2} = \lambda R(k, j) - 1.
\]
Since $j+1$ is odd, we have $g(j+1) = 1$. By Lemma~\ref{lem:R-parity}, $R(k, j+1) = R(k, j)$ when $j$ is even. Thus
\[
\Tri_c(k, j+1) = \lambda R(k, j+1) - 1 = \lambda R(k, j) - 1 = C(\Tri_c(k, j)).
\]

\textbf{Case 2: $j$ odd.} Then $\Tri_c(k, j) = \lambda R(k, j) - 1$ is odd. Applying $C$:
\[
C(\Tri_c(k, j)) = 3(\lambda R(k, j) - 1) + 1 = 3\lambda R(k, j) - 2.
\]
Since $j+1$ is even, we have $g(j+1) = 2$. By Lemma~\ref{lem:R-parity}, $R(k, j+1) = \frac{3}{2}R(k, j)$ when $j$ is odd. Thus
\[
\Tri_c(k, j+1) = 2\left(\lambda \cdot \frac{3}{2}R(k, j) - 1\right) = 3\lambda R(k, j) - 2 = C(\Tri_c(k, j)). \qedhere
\]
\end{proof}

\begin{corollary}[Collatz chains]\label{cor:chains}
Row $k$ of $\Tri_c$ forms a Collatz chain of length $2k+1$:
\[
\Tri_c(k, 0) \to \Tri_c(k, 1) \to \cdots \to \Tri_c(k, 2k).
\]
\end{corollary}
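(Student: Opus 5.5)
The plan is to check the three parts of Definition~\ref{def:collatz-chain} in turn: the map relation, parity alternation, and maximality. The first two follow directly from earlier results. Maximality needs a short divisibility argument at each end of the row, and that is the only real content.

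\textbf{Map relation and alternation.} Row $k$ lists the $2k+1$ elements $\Tri_c(k,0),\dots,\Tri_c(k,2k)$. For $k\ge 1$, Theorem~\ref{thm:row-walking} gives $C(\Tri_c(k,j))=\Tri_c(k,j+1)$ for $0\le j<2k$. By Proposition~\ref{prop:parity}, $\Tri_c(k,j)$ is even exactly when $j$ is even, so consecutive entries have opposite parity. For $k=0$ the row is the single crown $c$, and these two conditions hold vacuously.

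\textbf{Maximality at the right end.} The last entry is $\Tri_c(k,2k)=2(\lambda 3^k-1)$, which is even, so its image is $C(\Tri_c(k,2k))=\lambda 3^k-1$. By Proposition~\ref{prop:lambda-bijection}, $\lambda$ is coprime to $6$, hence odd. Then $\lambda 3^k$ is odd and $\lambda 3^k-1$ is even. Two consecutive even terms would break alternation, so the chain cannot be extended forward. This also covers $k=0$, where $C(c)=\lambda-1$ is even.

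\textbf{Maximality at the left end.} This is the step I expect to need the most care. The first entry is $m=\Tri_c(k,0)=2(\lambda 2^k-1)$, which is even, so any predecessor that preserves alternation must be odd. An odd $p$ with $C(p)=m$ satisfies $3p+1=m$, so $3p=2^{k+1}\lambda-3$. This forces $3\mid 2^{k+1}\lambda$, which is impossible because $\gcd(\lambda,3)=1$. The only remaining predecessor is $2m$, which is even and so breaks alternation. Hence the chain cannot be extended backward. The argument is uniform in $k\ge 0$, so it includes the crown row.

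Combining the three parts, row $k$ is a maximal alternating-parity $C$-orbit segment with $2k+1$ terms.
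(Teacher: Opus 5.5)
Your proposal is correct. For the map relation and the parity alternation it matches the paper. The paper states this corollary directly after Theorem~\ref{thm:row-walking} with no separate proof, so its justification is your first paragraph: Row-Walking for $C(\Tri_c(k,j))=\Tri_c(k,j+1)$, and Proposition~\ref{prop:parity} for alternation.

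Where you go further is the maximality clause of Definition~\ref{def:collatz-chain}, which the paper never checks. You check both ends:
\begin{itemize}
\item Forward: the last entry $2(\lambda 3^k-1)$ maps to $\lambda 3^k-1$, which is even because $\lambda$ is odd.
\item Backward: the first entry $2^{k+1}\lambda-2$ has no odd preimage, since $3\nmid 2^{k+1}\lambda$. Its only other preimage, $2^{k+2}\lambda-4$, is even.
\end{itemize}
This is a genuine addition rather than a different route. Without it, the paper's argument only shows that each row is an alternating $C$-orbit segment, not that it is a \emph{maximal} one, and maximality is what the definition of a Collatz chain demands.

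Two smaller points in your favour. Your separate treatment of $k=0$ covers the crown row, which Theorem~\ref{thm:row-walking} (stated for $k\ge 1$) leaves out. Your left-end computation at $k=0$ is exactly the paper's later remark that crowns are the multiples of $4$ outside the range of $n\mapsto 3n+1$.
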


\subsection{The Partition Theorem}

\begin{theorem}[Partition]\label{thm:partition}
The crown triangles partition the nonnegative integers:
\[
\Znn = \bigsqcup_{c \in \Crown} \Tri_c.
\]
Every nonnegative integer belongs to exactly one crown triangle at exactly one position.
\end{theorem}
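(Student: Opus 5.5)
We prove the Partition Theorem by exhibiting, for every nonnegative integer $n$, a unique triple $(c, k, j)$ with $c \in \Crown$, $k \geq 0$, $0 \leq j \leq 2k$ and $\Tri_c(k,j) = n$. The starting point is the 3-smooth factorization recalled in the introduction: every positive integer $m$ can be written uniquely as $m = \lambda \cdot 2^a \cdot 3^b$ with $\gcd(\lambda,6)=1$ and $a,b \geq 0$. By Proposition~\ref{prop:lambda-bijection}, $\lambda$ determines the crown $c = 2\lambda - 2$. We treat odd and even $n$ separately, using Proposition~\ref{prop:parity} to match the parity of $n$ with the parity of $j$.

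\textbf{Odd $n$.} Here $n$ must sit at an odd position $j = 2m+1$, so we need $n = \lambda R(k,j) - 1 = \lambda \cdot 2^{k-m} 3^m - 1$ with $k - m \geq 1$. Factor $n+1 = \lambda \cdot 2^a 3^b$. Since $n+1$ is even, $a \geq 1$. Set $m = b$ and $k = a+b$, so that $j = 2b+1 \leq 2k-1$. Conversely, any representation of $n$ at an odd position yields such a factorization of $n+1$, with $a = k-m \geq 1$ and $b = m$. Uniqueness of the factorization then gives uniqueness of $(\lambda, k, j)$.

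\textbf{Even $n$.} Here $n$ must sit at an even position $j = 2m$ with $0 \leq m \leq k$, so $n = 2(\lambda \cdot 2^{k-m}3^m - 1)$, that is, $n/2 + 1 = \lambda \cdot 2^{k-m} 3^m$. Now $n/2+1 \geq 1$ is a positive integer with no parity constraint. Factor it as $\lambda \cdot 2^a 3^b$ with $a, b \geq 0$, and set $m = b$, $k = a+b$, $j = 2b$. The constraints $0 \leq j \leq 2k$ hold automatically. The case $n = 0$ gives $n/2+1 = 1$, hence $\lambda = 1$ and $k=j=0$: this is the crown $0$ itself. Uniqueness follows exactly as in the odd case.

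Combining the two cases, each $n$ lies in exactly one $\Tri_c$ at exactly one position. The same computation also shows that the union covers $\Znn$ and that distinct positions, whether in the same triangle or in different triangles, carry distinct values. The only real care needed is the bookkeeping on the index ranges. We must check that $a \geq 1$ corresponds exactly to $j \leq 2k-1$ in the odd case, and that $a \geq 0$ corresponds exactly to $j \leq 2k$ in the even case. The parity split is what makes these two parametrizations disjoint. Beyond this bookkeeping there is no substantive obstacle: the whole argument is a change of coordinates built on unique factorization.
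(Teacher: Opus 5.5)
Your proof is correct and follows the paper's route. You use the same odd/even split, the same factorizations of $n+1$ and $n/2+1$ as $\lambda\cdot 2^a\cdot 3^b$ giving $c=2\lambda-2$, $k=a+b$ and $j=2b+1$ or $2b$, and the same handling of $n=0$, exactly as in Lemmas~\ref{lem:inverse-odd} and~\ref{lem:inverse-even}. Your explicit converse step, which uses Proposition~\ref{prop:parity} to force the parity of $j$ and then reads a factorization off any representation $n=\Tri_c(k,j)$, makes the uniqueness half more rigorous than the paper's one-line appeal to the explicit inverse.
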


We prove this by constructing an explicit inverse to the map $(c, k, j) \mapsto \Tri_c(k, j)$.

\begin{lemma}[Inverse map for odd integers]\label{lem:inverse-odd}
Let $n$ be an odd positive integer. Write $n + 1 = 2^a \cdot 3^b \cdot \lambda$ where $a \geq 1$, $b \geq 0$, and $\gcd(\lambda, 6) = 1$. This factorization is unique. Then $n = \Tri_c(k, j)$ where $c = 2\lambda - 2$, $k = a + b$, and $j = 2b + 1$.
\end{lemma}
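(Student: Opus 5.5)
The plan has three parts: establish existence and uniqueness of the factorization, check that the proposed crown and position are valid, and then verify the identity by direct substitution into Definition~\ref{def:crown-triangle}.

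\emph{Factorization.} Since $n$ is odd and positive, $n+1$ is an even positive integer. By unique prime factorization we may set $a = v_2(n+1)$, $b = v_3(n+1)$, and $\lambda = (n+1)/(2^a 3^b)$. Then $a \geq 1$ because $n+1$ is even, $b \geq 0$, and $\gcd(\lambda,6)=1$ by maximality of the extracted powers. For uniqueness, suppose $2^a 3^b \lambda = 2^{a'} 3^{b'} \lambda'$ with $\gcd(\lambda,6)=\gcd(\lambda',6)=1$. Comparing $2$-adic and $3$-adic valuations of both sides forces $a=a'$ and $b=b'$, and hence $\lambda=\lambda'$.

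\emph{Validity of $c$ and $(k,j)$.} Since $\lambda$ is a positive integer coprime to $6$, Proposition~\ref{prop:lambda-bijection} shows that $c = 2\lambda - 2$ is a crown with $\lambda(c)=\lambda$. Set $k=a+b$ and $j=2b+1$. Then $j$ is odd, and because $a\geq 1$,
\[
j \leq 2(k-1)+1 = 2k-1 \leq 2k,
\]
so $(k,j)$ is a legal position in row $k$. In addition $k \geq 1$.

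\emph{Substitution.} Because $j$ is odd, $g(j)=1$, and $\lfloor j/2\rfloor = b$. Therefore
\[
R(k,j) = 2^{k-b}\, 3^{b} = 2^{a} 3^{b},
\]
and so
\[
\Tri_c(k,j) = \lambda \cdot 2^a 3^b - 1 = n.
\]

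The only point requiring care is the range check $j \leq 2k$, which is exactly where the hypothesis $a\geq 1$ is used. Everything else is routine bookkeeping. For use in Theorem~\ref{thm:partition}, I would also note that the triple $(c,k,j)$ is uniquely determined by $n$, since it is built from the unique factorization.
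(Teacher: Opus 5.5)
Your proposal is correct and follows essentially the same route as the paper: extract the $2$- and $3$-parts of $n+1$, then use $g(j)=1$ and $\lfloor j/2\rfloor=b$ to get $R(k,j)=2^a3^b$ and hence $\Tri_c(k,j)=n$. Your explicit checks that $c=2\lambda-2$ is a crown and that $j\le 2k-1$ (using $a\ge 1$) fill in details the paper leaves implicit in the odd case.
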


\begin{proof}
Since $n$ is odd, $n + 1$ is even, so $a \geq 1$. The factorization is unique by the fundamental theorem of arithmetic.

We verify that $n = \Tri_c(k, j)$. Since $j = 2b + 1$ is odd, we have $g(j) = 1$ and
\[
\Tri_c(k, j) = \lambda \cdot R(k, j) - 1.
\]
At odd position $j = 2b + 1$, we have $\lfloor j/2 \rfloor = b$, so
\[
R(k, j) = 2^{k - b} \cdot 3^b = 2^{(a+b) - b} \cdot 3^b = 2^a \cdot 3^b.
\]
Thus $\Tri_c(k, j) = \lambda \cdot 2^a \cdot 3^b - 1 = (n + 1) - 1 = n$.

\end{proof}

\begin{lemma}[Inverse map for even integers]\label{lem:inverse-even}
Let $n$ be a positive even integer. Write $n/2 + 1 = 2^{a'} \cdot 3^{b'} \cdot \lambda$ where $a', b' \geq 0$ and $\gcd(\lambda, 6) = 1$. Then $n = \Tri_c(k, j)$ where $c = 2\lambda - 2$, $k = a' + b'$, and $j = 2b'$.
\end{lemma}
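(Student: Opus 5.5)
The plan mirrors the proof of Lemma~\ref{lem:inverse-odd}: establish the factorization, then substitute the proposed coordinates into Definition~\ref{def:crown-triangle} and check that the formula returns $n$.

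\textbf{Step 1: the factorization.} Since $n \geq 2$ is even, $m = n/2 + 1$ is an integer with $m \geq 2$. By the fundamental theorem of arithmetic, $m$ factors uniquely as $2^{a'} \cdot 3^{b'} \cdot \lambda$ with $a', b' \geq 0$ and $\gcd(\lambda, 6) = 1$. Here $a'$ may be $0$, unlike the odd case.

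\textbf{Step 2: the crown and the position range.} By Proposition~\ref{prop:lambda-bijection}, $c = 2\lambda - 2$ is a crown, because $\lambda$ is a positive integer coprime to $6$. Set $k = a' + b'$ and $j = 2b'$. We need $j$ to be a valid position, i.e.\ $0 \leq j \leq 2k$. This holds because $b' \leq a' + b'$, and the extreme case $j = 2k$ occurs exactly when $a' = 0$.

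\textbf{Step 3: evaluate the triangle.} Since $j$ is even, $g(j) = 2$. Lemma~\ref{lem:R-parity}(1) then gives
\[
R(k, j) = 2^{k - b'} \cdot 3^{b'} = 2^{a'} \cdot 3^{b'}.
\]
Hence
\[
\Tri_c(k, j) = 2\bigl(\lambda \cdot 2^{a'} \cdot 3^{b'} - 1\bigr) = 2\Bigl(\frac{n}{2} + 1 - 1\Bigr) = n.
\]

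\textbf{Main point of care.} The only step that needs attention is allowing $a' = 0$ and confirming that the resulting position $j = 2k$ is still inside row $k$; in the odd case, $a \geq 1$ kept $j$ strictly interior. It is also worth noting that $n = 0$ is excluded by the hypothesis, and it is covered anyway by $\Tri_0(0,0) = 0$.
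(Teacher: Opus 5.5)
Your proposal is correct and follows the paper's proof: use $g(j)=2$, compute $R(k,2b') = 2^{a'}\cdot 3^{b'}$, substitute to recover $n$, and check $0 \le 2b' \le 2(a'+b')$. You also verify, via Proposition~\ref{prop:lambda-bijection}, that $c = 2\lambda - 2$ is actually a crown, which the paper leaves implicit.
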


\begin{proof}
Since $j = 2b'$ is even, we have $g(j) = 2$ and
\[
\Tri_c(k, j) = 2(\lambda \cdot R(k, j) - 1).
\]
At even position $j = 2b'$, we have $R(k, j) = 2^{a'} \cdot 3^{b'}$. Thus
\[
\Tri_c(k, j) = 2(\lambda \cdot 2^{a'} \cdot 3^{b'} - 1) = 2(n/2 + 1 - 1) = n.
\]
The constraint $0 \leq j \leq 2k$ requires $0 \leq 2b' \leq 2(a' + b')$, which holds since $a', b' \geq 0$.
\end{proof}

\begin{proof}[Proof of Theorem~\ref{thm:partition}]
For $n = 0$: We have $n/2 + 1 = 1 = 2^0 \cdot 3^0 \cdot 1$, giving $\lambda = 1$, $a' = b' = 0$, $k = 0$, $j = 0$, and $c = 0$. Indeed, $\Tri_0(0, 0) = 2(1 \cdot 1 - 1) = 0$.

For $n > 0$: Lemmas~\ref{lem:inverse-odd} and~\ref{lem:inverse-even} establish that every positive integer appears in exactly one crown triangle. The inverse maps are well-defined because prime factorization is unique.

Injectivity follows from the explicit inverse: given $n$, the parameters $(c, k, j)$ are uniquely determined.
\end{proof}

\begin{lemma}[Edge formulas]\label{lem:edges}
Two edges (position $j = 0$ and position $j = 2k$) of row $k$ satisfy:
\begin{align*}
\Tri_c(k, 0) &= 2(2^k \lambda - 1), \\
\Tri_c(k, 2k) &= 2(3^k \lambda - 1).
\end{align*}
\end{lemma}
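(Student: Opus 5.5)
The plan is to substitute the two extreme positions directly into Definition~\ref{def:crown-triangle}. Both $j=0$ and $j=2k$ are even, so the parity multiplier is $g(0)=g(2k)=2$ in each case. This gives $\Tri_c(k,j) = 2(\lambda R(k,j) - 1)$ for both edges, and it remains only to evaluate the residual triangle at these two positions.

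For this I will use part~1 of Lemma~\ref{lem:R-parity}, namely $R(k,j) = 2^{k-j/2}\cdot 3^{j/2}$ for even $j$.
\begin{itemize}
\item At $j=0$ the lemma gives $R(k,0)=2^k$, so $\Tri_c(k,0)=2(2^k\lambda-1)$.
\item At $j=2k$ it gives $R(k,2k)=2^{0}\cdot 3^{k}=3^k$, so $\Tri_c(k,2k)=2(3^k\lambda-1)$.
\end{itemize}
One should check that $j=2k$ is an admissible position: the definition of row $k$ allows $0\le j\le 2k$, so it is. The case $k=0$ is consistent, since both formulas return the crown $2\lambda-2=c$, matching the earlier remark.

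There is no genuine obstacle here; the computation is a two-line substitution. The only care needed is to apply the even-$j$ branch of both $g$ and $R$ correctly.

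As a sanity check I will compare with Example~\ref{ex:triangles}. For $\lambda=1$ and $k=3$ the formulas give $\Tri_0(3,0)=14$ and $\Tri_0(3,2k)=52$, in agreement with the table. The formulas also explain the edge recurrences in Figure~\ref{fig:triangles}. Indeed $2(2^{k+1}\lambda-1)=2\cdot 2(2^k\lambda-1)+2$, which is the recurrence $2n+2$. Likewise $2(3^{k+1}\lambda-1)=3\cdot 2(3^k\lambda-1)+4$, which is the recurrence $3n+4$. These recurrences can be recorded as an immediate consequence.
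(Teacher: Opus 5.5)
Your proposal is correct and follows the same approach as the paper: substitute $j=0$ and $j=2k$ into the crown-triangle definition with $g=2$, then evaluate $R(k,0)=2^k$ and $R(k,2k)=3^k$. Your extra checks (the $k=0$ case, the numerical comparison with the table, and the derivation of the edge recurrences $2n+2$ and $3n+4$) are all correct but not needed for the lemma.
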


\begin{proof}
At $j = 0$: $R(k, 0) = 2^k$, $g(0) = 2$, so $\Tri_c(k, 0) = 2(\lambda \cdot 2^k - 1)$.

At $j = 2k$: $R(k, 2k) = 3^k$, $g(2k) = 2$, so $\Tri_c(k, 2k) = 2(\lambda \cdot 3^k - 1)$.
\end{proof}

\section{The Odd Skeleton and Diagonal Dynamics}\label{sec:skeleton}

The odd elements of each crown triangle form a structured subset called the odd skeleton. The skeleton admits coordinates that make Collatz dynamics transparent.

\begin{definition}[Odd skeleton]
The odd skeleton $\Skel_\lambda$ consists of the odd elements of $\Tri_c$ where $\lambda = (c+2)/2$. We index these elements by coordinates $(a, b)$ where $a$ is the exponent of 2 and $b$ is the exponent of 3 in the factorization $n + 1 = \lambda \cdot 2^a \cdot 3^b$.
\end{definition}

\begin{remark}
For visualization in Section~\ref{sec:observations}, we extend $\lambda$ to all integers in a Collatz orbit by assigning each integer the $\lambda$ value of the next crown triangle reached along its forward Collatz orbit, but this extension is not used in any proof. 
\end{remark}

\begin{proposition}[Skeleton structure]\label{prop:skeleton}
The odd skeleton $\Skel_\lambda$ consists of exactly the integers
\[
\Skel_\lambda = \{\lambda \cdot 2^a \cdot 3^b - 1 : a \geq 1, b \geq 0\}.
\]
The element at position $(a, b)$ lies in row $k = a + b$ of $\Tri_c$.
\end{proposition}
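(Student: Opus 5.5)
The plan is to prove the set equality by double inclusion, using Proposition~\ref{prop:parity} and the explicit form of $R$ from Lemma~\ref{lem:R-parity}. By Proposition~\ref{prop:parity}, the odd elements of $\Tri_c$ are exactly the entries $\Tri_c(k,j)$ with $j$ odd. So the task reduces to computing these entries and checking that their exponents range over the stated set.

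For the inclusion $\subseteq$, fix $k \geq 1$ and an odd position $j = 2m+1$ with $0 \leq m \leq k-1$. Since $g(j)=1$ and $R(k,j) = 2^{k-m}3^m$, we get
\[
\Tri_c(k,j) = \lambda \cdot 2^{k-m} \cdot 3^m - 1 .
\]
Set $a = k-m$ and $b = m$. The range $0 \leq m \leq k-1$ gives exactly $a \geq 1$ and $b \geq 0$, and $a+b = k$. Hence every odd element of $\Tri_c$ has the form $\lambda 2^a 3^b - 1$ with $a \geq 1$, $b \geq 0$, and lies in row $k = a+b$. Row $k=0$ contributes no odd element, since its only entry is the crown $c$, which is even.

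For the inclusion $\supseteq$, take any $a \geq 1$ and $b \geq 0$, and put $k = a+b$ and $j = 2b+1$. Then $j \leq 2k-1 < 2k$, so $(k,j)$ is a valid position. The computation in Lemma~\ref{lem:inverse-odd} then shows $\Tri_c(k,j) = \lambda 2^a 3^b - 1$. This element is odd, so it belongs to $\Skel_\lambda$.

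The only point needing care is that the coordinates $(a,b)$ are well defined, i.e.\ that they match the exponents of $2$ and $3$ in $n+1$ as in the definition of the skeleton. Because $\gcd(\lambda,6)=1$, unique factorization guarantees that $n+1 = \lambda 2^a 3^b$ determines $a$ and $b$ uniquely. So the labeling by $(a,b)$ agrees with the one in Lemma~\ref{lem:inverse-odd}, and distinct pairs $(a,b)$ give distinct elements. This settles both the description of $\Skel_\lambda$ and the row statement $k = a+b$. I expect no real obstacle; the argument is bookkeeping on the index ranges.
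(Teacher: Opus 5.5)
Your proof is correct and follows the same route as the paper: the paper uses Lemma~\ref{lem:inverse-odd} to place each $\lambda \cdot 2^a \cdot 3^b - 1$ at position $(a+b, 2b+1)$, then notes that the odd positions $j = 1, 3, \ldots, 2k-1$ of row $k$ correspond to $b = 0, \ldots, k-1$ with $a = k-b \geq 1$. Your version makes the double inclusion explicit, adding the appeal to Proposition~\ref{prop:parity}, the remark that row $0$ has no odd entry, and the unique-factorization check that the coordinates $(a,b)$ are well defined.
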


\begin{proof}
By Lemma~\ref{lem:inverse-odd}, every nonnegative odd integer $n$ with $n + 1 = \lambda \cdot 2^a \cdot 3^b$ appears at position $j = 2b + 1$ in row $k = a + b$. The odd positions in row $k$ are $j = 1, 3, 5, \ldots, 2k-1$, corresponding to $b = 0, 1, \ldots, k-1$. For each such $b$, we have $a = k - b \geq 1$.
\end{proof}


\begin{example}
The principal skeleton $\Skel_1$ consists of nonnegative integers $2^a \cdot 3^b - 1$:
\begin{center}
\begin{tabular}{c|ccccc}
$k = a + b$ & $b = 0$ & $b = 1$ & $b = 2$ & $b = 3$ & $b = 4$ \\
\midrule
1 & 1 \\
2 & 3 & 5 \\
3 & 7 & 11 & 17 \\
4 & 15 & 23 & 35 & 53 \\
5 & 31 & 47 & 71 & 107 & 161
\end{tabular}
\end{center}
The column $b = 0$ contains the Mersenne numbers $2^k - 1$. The column $b = 1$ contains Thabit numbers $2^{k-1} \cdot 3 - 1$.
\end{example}

\begin{remark}
The skeleton coordinates $(a, b)$ and the crown triangle coordinates $(k, j)$ are related by $k = a + b$ and $j = 2b + 1$. Row $k$ of the odd skeleton $\Skel_\lambda$ consists of the $k$ odd positions of row $k$ in $\Tri_c$, corresponding to $b = 0, 1, \ldots, k - 1$ with $a = k - b$.
\end{remark}

\subsection{Diagonal Dynamics}

The Collatz map has a simple form when restricted to the skeleton interior.

\begin{theorem}[Diagonal Dynamics]\label{thm:diagonal}
Let $n = \lambda \cdot 2^a \cdot 3^b - 1$ with $a \geq 2$. Under the Collatz map, the next odd number in the trajectory is $\lambda \cdot 2^{a-1} \cdot 3^{b+1} - 1$. In skeleton coordinates, position $(a, b)$ maps to $(a-1, b+1)$.
\end{theorem}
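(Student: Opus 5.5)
The argument is a direct two-step computation with the Collatz map $C$: since $n$ is odd we first apply $3n+1$, then check that the result is exactly twice an odd number, so a single halving lands on the next odd element of the trajectory. Equivalently, it is the composition of two steps of the Row-Walking Theorem~\ref{thm:row-walking}, read through the coordinate dictionary $k=a+b$, $j=2b+1$ from Lemma~\ref{lem:inverse-odd}.

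\textbf{Step 1 (odd step).} Since $a\geq 1$, $n=\lambda\cdot 2^a\cdot 3^b-1$ is odd, so
\[
C(n)=3n+1=\lambda\cdot 2^a\cdot 3^{b+1}-2=2\bigl(\lambda\cdot 2^{a-1}\cdot 3^{b+1}-1\bigr).
\]
In triangle coordinates this is Case~2 of Theorem~\ref{thm:row-walking}: position $j=2b+1$ in row $k$ maps to the even position $j+1=2b+2$.

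\textbf{Step 2 (even step).} Set $m=\lambda\cdot 2^{a-1}\cdot 3^{b+1}-1$. The hypothesis $a\geq 2$ gives $a-1\geq 1$, so $\lambda\cdot 2^{a-1}\cdot 3^{b+1}$ is even and $m$ is odd. Hence $C(n)=2m$ with $m$ odd, so $C(C(n))=m$, and $m$ is the first odd number after $n$ in the trajectory.

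\textbf{Step 3 (coordinates).} Because $\gcd(\lambda,6)=1$, the factorization $m+1=\lambda\cdot 2^{a-1}\cdot 3^{b+1}$ is the unique one of Lemma~\ref{lem:inverse-odd}. Therefore $m\in\Skel_\lambda$ at position $(a-1,b+1)$, which is admissible since $a-1\geq 1$. Its row is $(a-1)+(b+1)=k$, and its triangle index is $j=2(b+1)+1=j+2$, matching two applications of Theorem~\ref{thm:row-walking}. This also shows that $v_2(3n+1)=1$.

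The only delicate point is Step 2: the hypothesis $a\geq 2$ is exactly what guarantees that $C(n)/2$ is odd. When $a=1$, the quantity $\lambda\cdot 3^{b+1}-1$ is even and the trajectory halves further, leaving the skeleton. Beyond noting this, no real obstacle arises.
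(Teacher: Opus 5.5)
Your proof is correct and matches the paper's argument: both rest on the factorization $3n+1 = 2(\lambda\cdot 2^{a-1}\cdot 3^{b+1}-1)$, where $a-1\geq 1$ forces the bracket to be odd, so exactly one halving occurs. Your Step~3 goes slightly beyond the paper, which leaves this implicit: it identifies the new position through Lemma~\ref{lem:inverse-odd} and cross-checks it against two applications of Theorem~\ref{thm:row-walking}.
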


\begin{proof}
Compute:
\[
3n + 1 = 3(\lambda \cdot 2^a \cdot 3^b - 1) + 1 = \lambda \cdot 2^a \cdot 3^{b+1} - 2 = 2(\lambda \cdot 2^{a-1} \cdot 3^{b+1} - 1).
\]
Since $a \geq 2$, we have $a - 1 \geq 1$, so $\lambda \cdot 2^{a-1} \cdot 3^{b+1}$ is even. Thus $\lambda \cdot 2^{a-1} \cdot 3^{b+1} - 1$ is odd, and $3n + 1 = 2 \times (\text{odd})$. Exactly one division by 2 yields the next odd number.
\end{proof}

\begin{corollary}[Interior 2-adic valuation]\label{cor:valuation}
For $n = \lambda \cdot 2^a \cdot 3^b - 1$ with $a \geq 2$, we have $v_2(3n+1) = 1$.
\end{corollary}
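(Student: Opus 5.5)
The corollary follows almost immediately from the computation in the proof of Theorem~\ref{thm:diagonal}, so I plan to reuse that identity directly. I will expand
\[
3n+1 = 3(\lambda \cdot 2^a \cdot 3^b - 1) + 1 = 2\bigl(\lambda \cdot 2^{a-1} \cdot 3^{b+1} - 1\bigr),
\]
which shows that $2$ divides $3n+1$, so $v_2(3n+1) \geq 1$.

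For the matching upper bound I need the cofactor $m = \lambda \cdot 2^{a-1} \cdot 3^{b+1} - 1$ to be odd. The hypothesis $a \geq 2$ gives $a - 1 \geq 1$, so $2^{a-1}$ is even. Hence $\lambda \cdot 2^{a-1} \cdot 3^{b+1}$ is even and $m$ is odd; note that this step does not use $\gcd(\lambda,6)=1$. Therefore $3n+1 = 2m$ with $m$ odd, and $v_2(3n+1) = 1$ exactly.

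There is no genuine obstacle here. The only point to state explicitly is why $a \geq 2$ is needed. When $a = 1$, the cofactor becomes $\lambda \cdot 3^{b+1} - 1$, which is even because $\lambda$ is odd. In that case $v_2(3n+1) \geq 2$, and its exact value depends on $v_2(\lambda \cdot 3^{b+1} - 1)$. This is the boundary behaviour the paper attributes to $a = 1$. I would add this as a one-line remark to show that the hypothesis is sharp.
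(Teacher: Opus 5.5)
Your proof is correct and matches the paper's argument: the corollary is read off directly from the identity $3n+1 = 2(\lambda \cdot 2^{a-1} \cdot 3^{b+1} - 1)$ in the proof of Theorem~\ref{thm:diagonal}, with $a \geq 2$ making the cofactor odd. Your sharpness remark at $a = 1$ agrees with Proposition~\ref{prop:boundary}.
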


This connects to the work of Applegate and Lagarias \cite{Applegate1995} on the Syracuse coefficient. The interior of every skeleton achieves the minimal possible coefficient because exactly one halving step follows each $3n+1$ operation.

\begin{corollary}[Row preservation]
The diagonal map preserves the row index: if $n$ is at position $(a, b)$ with $a + b = k$, then the next odd is at position $(a-1, b+1)$ with $(a-1) + (b+1) = k$.
\end{corollary}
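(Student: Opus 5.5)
The statement is the Row preservation corollary, so the proof is a direct bookkeeping consequence of Theorem~\ref{thm:diagonal} together with Proposition~\ref{prop:skeleton}.

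\textbf{Step 1: locate $n$ and its image.} Take $n = \lambda \cdot 2^a \cdot 3^b - 1$ with $a \geq 2$, which is the hypothesis under which the diagonal map is defined. Theorem~\ref{thm:diagonal} identifies the next odd number in the trajectory as $n' = \lambda \cdot 2^{a-1} \cdot 3^{b+1} - 1$. Write $a' = a-1$ and $b' = b+1$. Then $a' \geq 1$ and $b' \geq 0$, and $\lambda$ is unchanged with $\gcd(\lambda,6)=1$.

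\textbf{Step 2: read off the coordinates of $n'$.} By the uniqueness of the factorization in Lemma~\ref{lem:inverse-odd}, $n'$ lies in the same skeleton $\Skel_\lambda$, at position $(a', b')$. By Proposition~\ref{prop:skeleton}, this position lies in row $k' = a' + b'$.

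\textbf{Step 3: compare rows.} The row index of $n'$ is
\[
k' = (a-1) + (b+1) = a + b = k,
\]
so the diagonal map preserves the row index.

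\textbf{Where care is needed.} The only subtle point is confirming that the coordinates of $n'$ are genuinely $(a-1, b+1)$. One might worry that re-factoring $n'+1$ could pull extra powers of $2$ or $3$ out of $\lambda$. This cannot happen, because $\lambda$ is coprime to $6$, so the uniqueness part of Lemma~\ref{lem:inverse-odd} settles it.

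It is also worth noting the equivalent statement in triangle coordinates. Using $j = 2b+1$, the step sends $j \mapsto j+2$ within the same row $k$. This agrees with Theorem~\ref{thm:row-walking}, which applies $C$ twice along row $k$: odd to even to odd.
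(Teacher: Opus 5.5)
Your proposal is correct and matches the paper's argument. The paper gives no separate proof: the corollary follows immediately from Theorem~\ref{thm:diagonal} together with the identity $(a-1)+(b+1)=a+b$. Your two additions are sound but go beyond what the paper states: the uniqueness check via Lemma~\ref{lem:inverse-odd} that $n'$ genuinely sits at $(a-1,b+1)$ in $\Skel_\lambda$, and the cross-check $j\mapsto j+2$ against Theorem~\ref{thm:row-walking}.
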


\begin{corollary}[Interior step count]
From position $(a, b)$ with $a \geq 2$, reaching the boundary $a = 1$ takes exactly $a - 1$ diagonal steps. The exit position is $(1, b + a - 1)$.
\end{corollary}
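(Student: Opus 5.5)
The plan is to prove the statement by induction on the number of diagonal steps, using Theorem~\ref{thm:diagonal} as the inductive engine.

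\textbf{Claim.} For each $0 \le i \le a-1$, after exactly $i$ odd-to-odd steps starting from $n = \lambda \cdot 2^a \cdot 3^b - 1$, the trajectory sits at position $(a-i,\, b+i)$ of $\Skel_\lambda$.

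\textbf{Base case.} For $i=0$ the claim is just the definition of $n$.

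\textbf{Inductive step.} Suppose the trajectory is at $(a-i, b+i)$ with $i < a-1$. Then $a-i \ge 2$, so the hypothesis of Theorem~\ref{thm:diagonal} holds with $(a-i, b+i)$ in place of $(a,b)$. That theorem sends the point to $(a-i-1,\, b+i+1)$. The parameter $\lambda$ is unchanged: the new odd number is $\lambda \cdot 2^{a-i-1} \cdot 3^{b+i+1} - 1$ with the same $\gcd(\lambda,6)=1$ cofactor. By uniqueness of the factorization in Lemma~\ref{lem:inverse-odd}, these really are its skeleton coordinates, so we remain in $\Skel_\lambda$.

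\textbf{Conclusion.} Setting $i = a-1$ gives the position $(1,\, b+a-1)$, which lies on the boundary $a=1$. This position is reached after exactly $a-1$ steps. No earlier step lands on the boundary, because for $i < a-1$ the first coordinate $a-i$ is at least $2$. So the step count is exact and not merely an upper bound, and the row index $a+b$ is preserved throughout by the row preservation corollary.

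The only point needing care is the interpretation of ``reaching the boundary'': we count odd-to-odd (Syracuse-type) steps, not individual applications of $C$. If one instead counts applications of $C$, Corollary~\ref{cor:valuation} gives exactly two applications per diagonal step (one $3n+1$ and one halving), for $2(a-1)$ in total. I would state this parenthetically. There is no real obstacle here; the main task is verifying that the hypothesis $a \ge 2$ of Theorem~\ref{thm:diagonal} holds at every intermediate stage, which the induction bound $i < a-1$ guarantees.
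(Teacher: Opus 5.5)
Your proposal is correct and matches the paper's reasoning. The paper gives no separate proof and treats the corollary as an immediate consequence of iterating Theorem~\ref{thm:diagonal}'s map $(a,b)\mapsto(a-1,b+1)$ while the first coordinate is at least $2$, which is exactly your induction. Your added remarks are also correct and are left implicit in the paper: $\lambda$ is preserved by uniqueness of the factorization, and the walk takes $2(a-1)$ individual applications of $C$.
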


\subsection{Boundary Behavior}

At the skeleton boundary where $a = 1$, the diagonal dynamics terminate and the trajectory exits from the current skeleton and enters directly into a different skeleton.

\begin{proposition}[Boundary transition]\label{prop:boundary}
For $n = 2\lambda \cdot 3^b - 1$ at the boundary $a = 1$:
\[
v_2(3n + 1) = 1 + v_2(\lambda \cdot 3^{b+1} - 1).
\]
The trajectory exits skeleton $\Skel_\lambda$ and enters a skeleton $\Skel_{\lambda'}$ where $\lambda'$ depends on the factorization of $\lambda \cdot 3^{b+1} - 1$.
\end{proposition}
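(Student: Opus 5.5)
The plan is to repeat the computation from Theorem~\ref{thm:diagonal}, now with $a = 1$, and then read off the new skeleton from Lemma~\ref{lem:inverse-odd}. With $n = 2\lambda \cdot 3^b - 1$ we get
\[
3n + 1 = 2\lambda \cdot 3^{b+1} - 2 = 2(\lambda \cdot 3^{b+1} - 1).
\]
Since $\lambda$ is coprime to $6$, it is odd, so $\lambda \cdot 3^{b+1}$ is odd. Hence $m := \lambda \cdot 3^{b+1} - 1$ is even and positive, because $\lambda \geq 1$ and $b+1 \geq 1$ give $\lambda \cdot 3^{b+1} \geq 3$. Taking $2$-adic valuations then gives $v_2(3n+1) = 1 + v_2(m)$, with $v_2(m) \geq 1$. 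This is the displayed formula. As a side remark, at the boundary the Syracuse coefficient is at least $2$, in contrast with Corollary~\ref{cor:valuation}.

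For the exit claim, the next odd number in the trajectory is $n' = m / 2^{v_2(m)}$, which is odd and positive. By Lemma~\ref{lem:inverse-odd}, $n'$ has a unique factorization $n' + 1 = \lambda' \cdot 2^{a'} \cdot 3^{b'}$ with $a' \geq 1$ and $\gcd(\lambda', 6) = 1$. Proposition~\ref{prop:skeleton} then places $n'$ in $\Skel_{\lambda'}$ at position $(a', b')$. So $\lambda'$ is the $\{2,3\}$-free part of $m/2^{v_2(m)} + 1$, and it is determined by the factorization of $\lambda \cdot 3^{b+1} - 1$ as the statement says.

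The main obstacle is the word ``exits'', that is, proving $\lambda' \neq \lambda$. The statement is only plausibly true when $\lambda \neq 1$. For $\lambda = 1$, $b = 0$ we have $n = 1$, $m = 2$, and $n' = 1 \in \Skel_1$. So I would either read ``a skeleton $\Skel_{\lambda'}$'' as simply naming the skeleton containing the next odd number, or assume $\lambda \neq 1$. In the second case I would look for a size or divisibility argument: the position $(a', b')$ is not reached by the diagonal flow within $\Skel_\lambda$, because that flow only decreases $a$ and we are already at $a = 1$. The honest claim, which I would state precisely, is that the interior diagonal rule of Theorem~\ref{thm:diagonal} no longer governs the step. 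The landing skeleton is given by the factorization of $m$ above, and it may coincide with $\Skel_\lambda$ only in degenerate cases such as the fixed cycle through $1$.
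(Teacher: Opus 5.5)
This is the paper's proof: you expand $3n+1=2(\lambda\cdot 3^{b+1}-1)$, take $2$-adic valuations, and obtain $\lambda'$ by factoring one more than the odd part of $\lambda\cdot 3^{b+1}-1$; your parity and positivity checks on $\lambda\cdot 3^{b+1}-1$ are a small but welcome addition. Your reservation about ``exits'' is justified, since the paper's proof only names the landing skeleton and never shows $\lambda'\neq\lambda$. In $\Skel_1$ the return is also not confined to the fixed point: the next odd term after $5=2\cdot 3-1$ is $1$, and after $53=2\cdot 3^{3}-1$ it is $5$, both again in $\Skel_1$.
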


\begin{proof}
Compute $3n + 1 = 6\lambda \cdot 3^b - 2 = 2(\lambda \cdot 3^{b+1} - 1)$, so $v_2(3n+1) = 1 + v_2(\lambda \cdot 3^{b+1} - 1)$.

The next odd in the trajectory is the odd part of $\lambda \cdot 3^{b+1} - 1$. Writing this odd part as $\lambda' \cdot 2^{a'} \cdot 3^{b'} - 1$ determines the new skeleton $\Skel_{\lambda'}$.
\end{proof}

The interior dynamics are completely deterministic, requiring no factorization. The boundary is where number-theoretic complexity enters. Determining which skeleton a trajectory enters after exiting at boundary position $b$ requires only the odd part of $\lambda \cdot 3^{b+1} - 1$, which is elementary for a single transition. However, predicting the full sequence of boundary transitions is not addressed here by the coordinate system. It is at these boundary transitions, where $a=1$, that the number-theoretic difficulty of the Collatz conjecture resides. 

\subsection{Connection to Growth Rates}

The diagonal flow has a natural growth rate.

\begin{proposition}[Growth rate]
Along diagonal flow from $(a, b)$ to $(a-1, b+1)$, the value increases by a factor approaching $3/2$. Precisely, if $n = \lambda \cdot 2^a \cdot 3^b - 1$ and $n' = \lambda \cdot 2^{a-1} \cdot 3^{b+1} - 1$, then
\[
\frac{n' + 1}{n + 1} = \frac{3}{2}.
\]
\end{proposition}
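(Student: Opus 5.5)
The identity is a one-line computation from the closed forms supplied by Proposition~\ref{prop:skeleton}, and the only interpretive work is in reading ``increases by a factor approaching $3/2$'' correctly. I would first compute the exact ratio of the shifted quantities $n+1$ and $n'+1$, and then deduce the asymptotic statement about $n'/n$ from it.

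\textbf{The exact ratio.} Since $n = \lambda \cdot 2^a \cdot 3^b - 1$ and $n' = \lambda \cdot 2^{a-1}\cdot 3^{b+1} - 1$, we have
\[
n+1 = \lambda \cdot 2^a \cdot 3^b, \qquad n'+1 = \lambda \cdot 2^{a-1} \cdot 3^{b+1}.
\]
Dividing, the factor $\lambda$ and the common powers $2^{a-1}3^b$ cancel, leaving $3/2$. The denominator is nonzero because $\lambda \geq 1$. This is the same mechanism as Lemma~\ref{lem:R-parity}(3): moving one step along the diagonal multiplies the 3-smooth part by $3/2$. Theorem~\ref{thm:diagonal} shows that $n'$ really is the next odd number in the trajectory whenever $a \geq 2$, so the ratio describes an actual step of the dynamics.

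\textbf{The unshifted ratio.} To justify ``approaching $3/2$'', I would rewrite $n' = \tfrac32(n+1) - 1$. This gives
\[
\frac{n'}{n} = \frac32 + \frac{1}{2n}.
\]
The ratio therefore exceeds $3/2$ by exactly $1/(2n)$, so the value strictly increases (since $3/2 > 1$). The excess tends to $0$ as $n \to \infty$, which happens whenever $a + b$ or $\lambda$ grows.

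\textbf{Main obstacle.} There is no real obstacle. The only care needed is to distinguish the exact identity for $n+1$ from the asymptotic statement for $n$ itself.
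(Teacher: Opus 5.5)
Your proof is correct and matches the paper's, which consists of exactly the cancellation $(\lambda \cdot 2^{a-1}\cdot 3^{b+1})/(\lambda \cdot 2^{a}\cdot 3^{b}) = 3/2$. Your additional computation $n'/n = 3/2 + 1/(2n)$ is valid, since $a \geq 1$ gives $n \geq 1$, and it usefully justifies the informal phrase ``approaching $3/2$'', which the paper leaves unargued.
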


\begin{proof}
We have $(n' + 1)/(n + 1) = (\lambda \cdot 2^{a-1} \cdot 3^{b+1})/(\lambda \cdot 2^a \cdot 3^b) = 3/2$.
\end{proof}

The ratio $3/2$ connects to the extensive literature on the $\beta$-transformation $x \mapsto (3/2)x \mod 1$ studied by Flatto~\cite{Flatto1992}. The expansion rate emerges naturally from the skeleton parameterization.

\section{Zero-Prime Rows}\label{sec:zero-prime}

The principal skeleton $\Skel_1$ exhibits a striking pattern in the distribution of primes.

\begin{theorem}[Zero-Prime Rows]\label{thm:zero-prime}
For $k \equiv 2 \pmod 4$ with $k \geq 6$, row $k$ of the skeleton $\Skel_1$ contains no primes.
\end{theorem}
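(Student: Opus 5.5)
Row $k$ of $\Skel_1$ consists of $n = 2^a 3^b - 1$ with $a+b=k$, $a \geq 1$, $0 \leq b \leq k-1$. Write $k = 4m+2$ with $m \geq 1$, so $k \geq 6$. I will split by the parity of $a$ (equivalently of $b$, since $a+b$ is even). The two cases use, respectively, difference-of-squares factorization and divisibility by $5$, as advertised in the introduction.

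\textbf{Case 1: $a$ and $b$ both even.} Then $2^a3^b = (2^{a/2}3^{b/2})^2$, so
\[
n = (2^{a/2}3^{b/2}-1)(2^{a/2}3^{b/2}+1).
\]
Since $a \geq 1$ and $a$ is even, we have $a \geq 2$. The smaller factor is therefore at least $2^{a/2}3^{b/2}-1 \geq 1$, with equality only when $a=2, b=0$, but that case has $k=2<6$. So both factors exceed $1$ and $n$ is composite.

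\textbf{Case 2: $a$ and $b$ both odd.} I will work modulo $5$. Since $3 \equiv -2 \pmod 5$, we get $2^a3^b \equiv (-1)^b 2^{a+b} = -2^{k} \pmod 5$. Because $2$ has order $4$ mod $5$ and $k \equiv 2 \pmod 4$, we have $2^k \equiv 4$, hence $2^a3^b \equiv -4 \equiv 1 \pmod 5$. Thus $5 \mid n$. It remains to check $n \neq 5$. This holds because $n \geq 2\cdot 3 - 1$, and with $k \geq 6$ we have $n+1 = 2^a3^b \geq 2^k \geq 64$. So $n$ is a proper multiple of $5$ and is composite.

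The main point needing care is the bookkeeping in Case 2. I must confirm that the oddness of $b$ supplies the sign that turns $-4$ into $+1$ modulo $5$, and that the hypothesis $k \equiv 2 \pmod 4$, rather than merely $k$ even, is what makes $2^k \equiv 4$. The small-value exclusions, namely $k=2$ giving the primes $3$ and $5$, explain the hypothesis $k \geq 6$. Since rows are finite and every position falls into one of the two parity cases, this completes the argument.
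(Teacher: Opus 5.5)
Your proof is correct and follows the paper's argument. You split by the common parity of $a$ and $b$, factor the even--even positions as a difference of squares, and show that the odd--odd positions are divisible by $5$ and exceed $5$. Your Case 2 computation $2^a3^b \equiv (-1)^b2^k \equiv -4 \equiv 1 \pmod 5$ is a slightly cleaner route than the paper's detour through $a \equiv b \pmod 4$ and a table of powers. Your explicit exclusion of $(a,b)=(2,0)$ in Case 1 is also tighter than the paper's bound $2^m3^n \geq 2$, which by itself does not force both factors to exceed $1$.
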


\begin{proof}
The elements of row $k$ in $\Skel_1$ are $\{2^a \cdot 3^b - 1 : a + b = k, a \geq 1\}$.

\textbf{Case 1: $a$ and $b$ both even.} Since $k \equiv 2 \pmod 4$, if $a$ is even then $b = k - a$ has the same parity as $k$, which is even. Write $a = 2m$ and $b = 2n$. Then
\[
2^a \cdot 3^b - 1 = (2^m \cdot 3^n)^2 - 1 = (2^m \cdot 3^n - 1)(2^m \cdot 3^n + 1).
\]
For $k \geq 2$, we have $m + n = k/2 \geq 1$, so $2^m \cdot 3^n \geq 2$. Both factors exceed 1, so the element is composite.

\textbf{Case 2: $a$ and $b$ both odd.} Since $k \equiv 2 \pmod 4$ and $a + b = k$, if $a$ is odd then $b$ is also odd. Moreover, $a \equiv b \pmod 4$: two odd numbers summing to $k \equiv 2 \pmod 4$ must both be congruent to 1 or both congruent to 3 modulo 4.

The multiplicative orders modulo 5 are $\mathrm{ord}_5(2) = 4$ and $\mathrm{ord}_5(3) = 4$:
\begin{align*}
2^1 &\equiv 2, & 2^2 &\equiv 4, & 2^3 &\equiv 3, & 2^4 &\equiv 1 \pmod 5, \\
3^1 &\equiv 3, & 3^2 &\equiv 4, & 3^3 &\equiv 2, & 3^4 &\equiv 1 \pmod 5.
\end{align*}

If $a \equiv b \equiv 1 \pmod 4$: Then $2^a \equiv 2 \pmod 5$ and $3^b \equiv 3 \pmod 5$, so $2^a \cdot 3^b \equiv 6 \equiv 1 \pmod 5$.

If $a \equiv b \equiv 3 \pmod 4$: Then $2^a \equiv 3 \pmod 5$ and $3^b \equiv 2 \pmod 5$, so $2^a \cdot 3^b \equiv 6 \equiv 1 \pmod 5$.

In both cases, $2^a \cdot 3^b - 1 \equiv 0 \pmod 5$. For $k \geq 6$, all elements exceed 5, so they are composite.
\end{proof}

The zero-prime theorem raises a natural question: why is the residue class $k \equiv 2 \pmod 4$ special? The following theorem provides a complete answer.

\begin{theorem}[Complete Covering Theorem]\label{thm:complete-covering}
Let $k \geq 2$ and consider positions $(a, b)$ in row $k$ of $\Skel_1$ (i.e., $a + b = k$, $a \geq 1$). The two obstructions partition positions as follows:
\begin{enumerate}
    \item \textbf{Difference of squares (DoS):} Applies when $a$ and $b$ are both even.
    \item \textbf{Divisibility by 5:} Applies when $a$ and $b$ are both odd with $a \equiv b \pmod 4$.
\end{enumerate}
The row class $k \bmod 4$ determines which positions appear:
\begin{enumerate}
    \item[(a)] If $k \equiv 2 \pmod 4$: All positions have $a \equiv b \pmod 2$. Even-even positions satisfy DoS. Odd-odd positions satisfy $a \equiv b \pmod 4$ (since two odd numbers summing to $2 \bmod 4$ must be congruent mod 4). \textbf{Every position is obstructed.}
    \item[(b)] If $k$ is odd: All positions have $a \not\equiv b \pmod 2$. Neither obstruction applies. \textbf{All $k$ positions are prime-admissible.}
    \item[(c)] If $k \equiv 0 \pmod 4$: Even-even positions satisfy DoS, but odd-odd positions have $a \not\equiv b \pmod 4$ (since two odd numbers summing to $0 \bmod 4$ must be incongruent mod 4). \textbf{Exactly $k/2$ positions are prime-admissible.}
\end{enumerate}
\end{theorem}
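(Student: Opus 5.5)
The proof is a parity and mod-4 case analysis on the pair $(a,b)$ with $a+b=k$, $a\ge 1$, $b\ge 0$. The two obstructions themselves were already verified inside the proof of Theorem~\ref{thm:zero-prime}, and I will reuse them. For even $a=2m$, $b=2n$, the difference-of-squares factorization $(2^m3^n-1)(2^m3^n+1)$ applies. For odd $a,b$ with $a\equiv b\pmod 4$, the computation $2^a3^b\equiv 1\pmod 5$ applies, using $\mathrm{ord}_5(2)=\mathrm{ord}_5(3)=4$. So the only work is to determine, for each class of $k \bmod 4$, which parity/mod-4 patterns of $(a,b)$ occur, and to count them.

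First I record the elementary fact that $a+b=k$ forces $a\equiv b\pmod 2$ exactly when $k$ is even. For (b), $k$ odd, every position has opposite parities, so neither hypothesis holds. All $k$ positions, $b=0,\dots,k-1$, are then unobstructed by these two criteria, which is what ``prime-admissible'' means here.

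For (a), when $k\equiv 2\pmod 4$, I repeat the argument from Theorem~\ref{thm:zero-prime}. If $a,b$ are both odd, each is $1$ or $3 \bmod 4$, and $1+3\equiv 0$ while $1+1\equiv 3+3\equiv 2$, so $a\equiv b\pmod 4$. Every position then falls under one of the two obstructions.

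For (c), when $k\equiv 0\pmod 4$, the same table of sums shows that odd-odd pairs must be $\{1,3\}\bmod 4$, hence $a\not\equiv b\pmod 4$, so the mod-5 criterion does not fire. (Indeed $2^a3^b\equiv 2\cdot 2$ or $3\cdot 3\equiv 4\pmod 5$, and I may remark this to show the obstruction genuinely fails, not merely that its hypothesis fails.)

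It remains to count the odd-odd positions. With $b$ ranging over $0,\dots,k-1$ and $a=k-b$, the odd values of $b$ are $1,3,\dots,k-1$, which gives exactly $k/2$ positions. The even values of $b$, namely $0,2,\dots,k-2$, give $a=k-b\ge 2$ even, so these $k/2$ positions are DoS-obstructed.

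The only delicate point is interpretive rather than technical. ``Prime-admissible'' must be read as ``not covered by either of the two listed obstructions'' and not as an assertion of primality. Likewise, ``obstructed'' in (a) yields genuine compositeness only once the factors exceed $1$, respectively the element exceeds $5$; the hypothesis $k\ge 2$ handles this for DoS, while for the mod-5 case at $k=2$ the element $5$ itself appears. I would note explicitly that the theorem classifies positions by criterion, with compositeness requiring $k\ge 6$ as in Theorem~\ref{thm:zero-prime}.
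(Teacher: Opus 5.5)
Your proposal is correct and follows the paper's proof essentially step for step. The paper writes $a=2m+1$, $b=2n+1$ and reads off whether $a\equiv b$ or $a\not\equiv b \pmod 4$ from the parity of $m+n$, where you use the table of odd residues mod $4$. Both arguments count the $k/2$ odd-odd positions in (c) from the alternation of parities along the row.

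One correction to your closing remark: $k\ge 2$ does not make both difference-of-squares factors exceed $1$. At $(a,b)=(2,0)$ you get $(2-1)(2+1)=1\cdot 3$. Nontrivial factors need $2^m3^n\ge 3$, which holds once $k\ge 4$.
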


\begin{proof}
For (a): Let $k \equiv 2 \pmod 4$. If $a, b$ both even, DoS applies. If $a, b$ both odd, write $a = 2m+1$ and $b = 2n+1$. Then $a + b = 2(m+n+1) = k \equiv 2 \pmod 4$, so $m + n \equiv 0 \pmod 2$. Since $m + n$ is even, $m$ and $n$ have the same parity, so $m \equiv n \pmod 2$, which gives $a = 2m+1 \equiv 2n+1 = b \pmod 4$.

For (b): If $k$ is odd, then $a + b$ odd implies $a \not\equiv b \pmod 2$. Neither obstruction applies (DoS requires both even and div-by-5 requires both odd).

For (c): Let $k \equiv 0 \pmod 4$. For even-even positions, DoS applies. For odd-odd positions, write $a = 2m+1$, $b = 2n+1$. Then $m + n + 1 = k/2 \equiv 0 \pmod 2$, so $m + n$ is odd, hence $m \not\equiv n \pmod 2$, giving $a \not\equiv b \pmod 4$. Divisibility by 5 fails.

For the count, since $k \equiv 0 \pmod 4$ implies $k$ is even, the positions $a = 1, 2, \ldots, k$ alternate between $(a, b)$ both odd and $(a, b)$ both even. Row $k$ therefore has $k/2$ positions with both coordinates even (obstructed by DoS) and $k/2$ positions with both coordinates odd (prime-admissible).
\end{proof}

\begin{corollary}[Unique Complete Covering]\label{cor:unique-covering}
Among all residue classes modulo 4, the class $k \equiv 2 \pmod 4$ with $k \geq 6$ is the unique class where every position in $\Skel_1$ is algebraically obstructed.
\end{corollary}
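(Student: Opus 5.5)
The plan is to read the corollary as a direct consequence of Theorem~\ref{thm:complete-covering}, with Theorem~\ref{thm:zero-prime} supplying the small-value check. \emph{Algebraically obstructed} will mean that one of the two obstructions (DoS or divisibility by 5) applies at position $(a,b)$ \emph{and} forces $2^a 3^b - 1$ to be composite. The proof then has two halves: existence for $k \equiv 2 \pmod 4$, $k \geq 6$, and failure for each of the other three residue classes.

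For existence, invoke part (a) of Theorem~\ref{thm:complete-covering}: every position in a row $k \equiv 2 \pmod 4$ is covered by one of the two obstructions. To upgrade covering to compositeness, reuse the size estimates from the proof of Theorem~\ref{thm:zero-prime}. In the DoS case both factors $2^m3^n \pm 1$ exceed $1$ because $m+n = k/2 \geq 1$. In the divisibility-by-5 case we need $2^a3^b - 1 > 5$, which holds since $2^a3^b \geq 2^k \geq 64$ when $k \geq 6$. The restriction $k \geq 6$ is only there to exclude $k=2$, whose row is $\{3,5\}$, both prime. At $(a,b)=(1,1)$ the value $5$ is divisible by 5 but equals 5, so the obstruction does not produce a composite.

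For uniqueness, handle the other classes separately. If $k$ is odd, part (b) says no position is obstructed at all. Every row is nonempty ($k \geq 1$ gives the position $(a,b)=(k,0)$), so no odd row is completely covered. If $k \equiv 0 \pmod 4$, part (c) says exactly $k/2 \geq 2$ positions are unobstructed, so again the row is not completely covered.

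The main obstacle is not computational but definitional. I need to state clearly that ``obstructed'' refers to the two obstructions of Theorem~\ref{thm:complete-covering}, not to actual compositeness: some rows in other classes may happen to contain no primes, yet they are not \emph{algebraically} obstructed. Once this is fixed, uniqueness follows from parts (b) and (c), and existence from part (a) together with the size check above.
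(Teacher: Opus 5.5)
Your proposal is correct and follows the same route as the paper, which gives no separate proof but reads the corollary directly off parts (a)--(c) of Theorem~\ref{thm:complete-covering}, with the size check from Theorem~\ref{thm:zero-prime} (and the $k=2$ remark) accounting for the restriction $k\geq 6$. One small repair: $m+n=k/2\geq 1$ only gives $2^m3^n-1\geq 1$ (at $k=2$ the difference-of-squares split is the trivial $3=1\cdot 3$), so use $m+n=k/2\geq 3$ instead, which makes both factors at least $7$.
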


\begin{definition}[Prime-admissible]
A position $(a, b)$ in row $k$ of $\Skel_1$ is \emph{prime-admissible} if it is covered by neither the difference-of-squares obstruction nor the divisibility-by-5 obstruction.
\end{definition}

\begin{remark}[The exception at $k = 2$]
The constraint $k \geq 6$ is necessary because row $k = 2$ contains $\{3, 5\}$, both prime. The Complete Covering Theorem confirms that algebraic obstructions do apply at $k = 2$: position $(2,0)$ factors as $4 - 1 = 1 \times 3$ via difference of squares, and position $(1,1)$ gives $2 \cdot 3 - 1 = 5 \equiv 0 \pmod 5$. However, the obstructed values are the small primes 3 and 5 themselves. For $k \geq 6$, all obstructed values exceed 5 and are therefore composite.
\end{remark}

\begin{remark}[Prime-admissible position counts]
The Complete Covering Theorem yields exact counts of prime-admissible positions per row in $\Skel_1$:
\begin{center}
\begin{tabular}{ll}
\toprule
Row class & Prime-admissible positions in row $k$ \\
\midrule
$k$ odd & All $k$ positions \\
$k \equiv 0 \pmod 4$ & Only $k/2$ positions \\
$k \equiv 2 \pmod 4$, $k \geq 6$ & No positions \\
\bottomrule
\end{tabular}
\end{center}
\end{remark}

\begin{remark}[Generalization to other skeletons]\label{rem:generalization}
The complete covering phenomenon extends beyond $\Skel_1$, but the covered row class depends on $\lambda$. Two conditions govern the obstructions for $n = \lambda \cdot 2^a \cdot 3^b - 1$:
\begin{enumerate}
    \item \textbf{Difference of squares:} Requires $\lambda$ to be a perfect square and $a, b$ both even.
    \item \textbf{Divisibility by 5:} Requires $\lambda \cdot 2^a \cdot 3^b \equiv 1 \pmod 5$, which depends on $\lambda \bmod 5$.
\end{enumerate}
For $\lambda \equiv 1 \pmod 5$ with $\lambda$ a perfect square (including $\lambda = 1$), divisibility by 5 covers positions with $a \equiv b \pmod 4$, and rows $k \equiv 2 \pmod 4$ are completely obstructed. For $\lambda \equiv 4 \pmod 5$ with $\lambda$ a perfect square (e.g., $\lambda = 49$), the divisibility pattern shifts to cover $a \equiv b + 2 \pmod 4$, making rows $k \equiv 0 \pmod 4$ completely obstructed instead. To verify the shift: for $\lambda \equiv 4 \pmod 5$ with $a, b$ both odd, we have $\lambda \cdot 2^a \cdot 3^b \equiv 4 \cdot 2^a \cdot 3^b \pmod 5$. Using $\text{ord}_5(2) = \text{ord}_5(3) = 4$, the cases $a \equiv 1, b \equiv 3 \pmod 4$ give $4 \cdot 2 \cdot 2 = 16 \equiv 1$ and $a \equiv 3, b \equiv 1$ give $4 \cdot 3 \cdot 3 = 36 \equiv 1$. So the obstruction covers positions with $a \not\equiv b \pmod 4$, which characterizes rows $k \equiv 0 \pmod 4$. For non-square $\lambda$ or $\lambda \equiv 0 \pmod 5$, no row class admits complete coverage. Computational verification confirms that such skeletons contain primes in all row classes (see Figure~\ref{fig:zero-prime}).
\end{remark}

\begin{figure}[ht]
\centering
\includegraphics[width=1.0\textwidth]{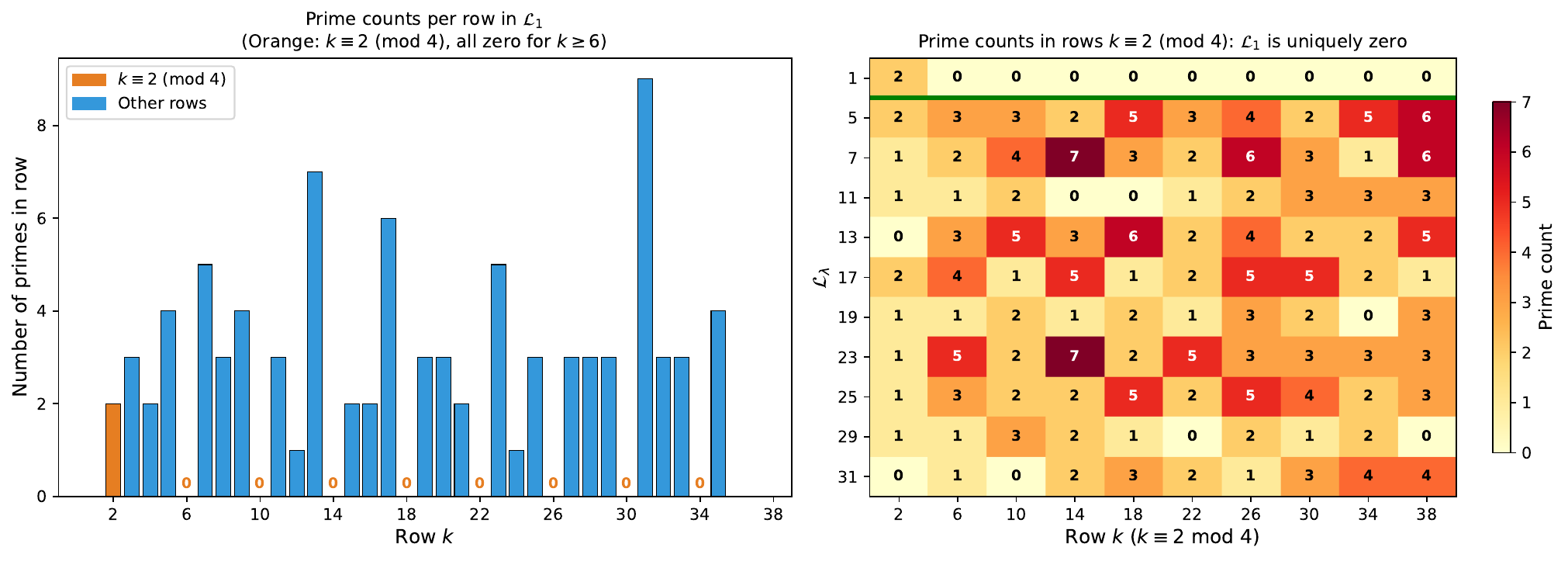}
\caption{Prime counts in rows $k \equiv 2 \pmod 4$. \textbf{Left:} In skeleton $\Skel_1$, rows with $k \equiv 2 \pmod 4$ contain zero primes for $k \geq 6$; the row $k = 2$ is an exception, containing the primes 3 and 5. \textbf{Right:} Across 11 skeletons ($\lambda \in \{1, 5, 7, 11, 13, 17, 19, 23, 25, 29, 31\}$) and 10 rows (110 pairs total), only $\Skel_1$ exhibits this zero-prime property; all other skeletons contain primes in these rows.}
\label{fig:zero-prime}
\end{figure}

The proof uses only elementary divisibility. The contribution of the crown framework is the organization that makes the pattern visible. Without the geometric skeleton structure, zero-primes may not be as obvious.

\section{Connections to Classical Sequences}\label{sec:connections}

The skeleton $\Skel_1$ organizes several classical number-theoretic sequences.

\subsection{Mersenne, Thabit, and Pierpont Numbers}

\begin{proposition}[Mersenne and Thabit numbers]\label{prop:mersenne-thabit}
In $\Skel_1$, the element at position $(a, b)$ is $2^a \cdot 3^b - 1$. Thus position $(k, 0)$ contains $2^k - 1$, the $k$-th Mersenne number, and position $(k-1, 1)$ contains $3 \cdot 2^{k-1} - 1$, a Thabit number.
\end{proposition}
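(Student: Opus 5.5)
The statement is a direct specialization of Proposition~\ref{prop:skeleton} to $\lambda = 1$, so the plan is to substitute and read off the two named columns. First, I will invoke Proposition~\ref{prop:skeleton}, which identifies $\Skel_\lambda$ with $\{\lambda \cdot 2^a \cdot 3^b - 1 : a \geq 1, b \geq 0\}$ and places the element with coordinates $(a,b)$ in row $k = a+b$. Setting $\lambda = 1$, which corresponds to the crown $c = 0$ via Proposition~\ref{prop:lambda-bijection}, gives the element $2^a \cdot 3^b - 1$ at position $(a,b)$. I will also note that, by the uniqueness part of Lemma~\ref{lem:inverse-odd}, position $(a,b)$ determines this element unambiguously: $n+1 = 2^a 3^b$ has cofactor $\lambda = 1$, which is coprime to $6$.

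Second, for the Mersenne column I will take $b = 0$ and $a = k$ with $k \geq 1$. The element is then $2^k \cdot 3^0 - 1 = 2^k - 1$, the $k$-th Mersenne number, and it lies in row $k$. For the Thabit column I will take $b = 1$ and $a = k-1$. The constraint $a \geq 1$ of Proposition~\ref{prop:skeleton} requires $k \geq 2$, which I will state explicitly. The element is then $2^{k-1} \cdot 3 - 1 = 3 \cdot 2^{k-1} - 1$. This matches the Thabit form $3 \cdot 2^m - 1$ with $m = k-1 \geq 1$.

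There is no real obstacle here. The only points needing care are the range restrictions: $k \geq 1$ for the Mersenne column and $k \geq 2$ for the Thabit column, which comes from the boundary condition $a \geq 1$. I will also cross-check against the table in the example of $\Skel_1$, where column $b=0$ reads $1,3,7,15,31$ and column $b=1$ reads $5,11,23,47$.
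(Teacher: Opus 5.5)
Your proposal is correct and takes the paper's approach: the paper gives no separate proof and obtains the statement by the same substitution of $\lambda = 1$ into Proposition~\ref{prop:skeleton}, reading off the columns $b = 0$ and $b = 1$. Your explicit range restrictions ($k \geq 1$ for the Mersenne column, and $k \geq 2$ for the Thabit column, forced by $a \geq 1$) are a useful addition that the paper leaves implicit.
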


\begin{figure}[H]
    \centering
    \includegraphics[width=\textwidth]{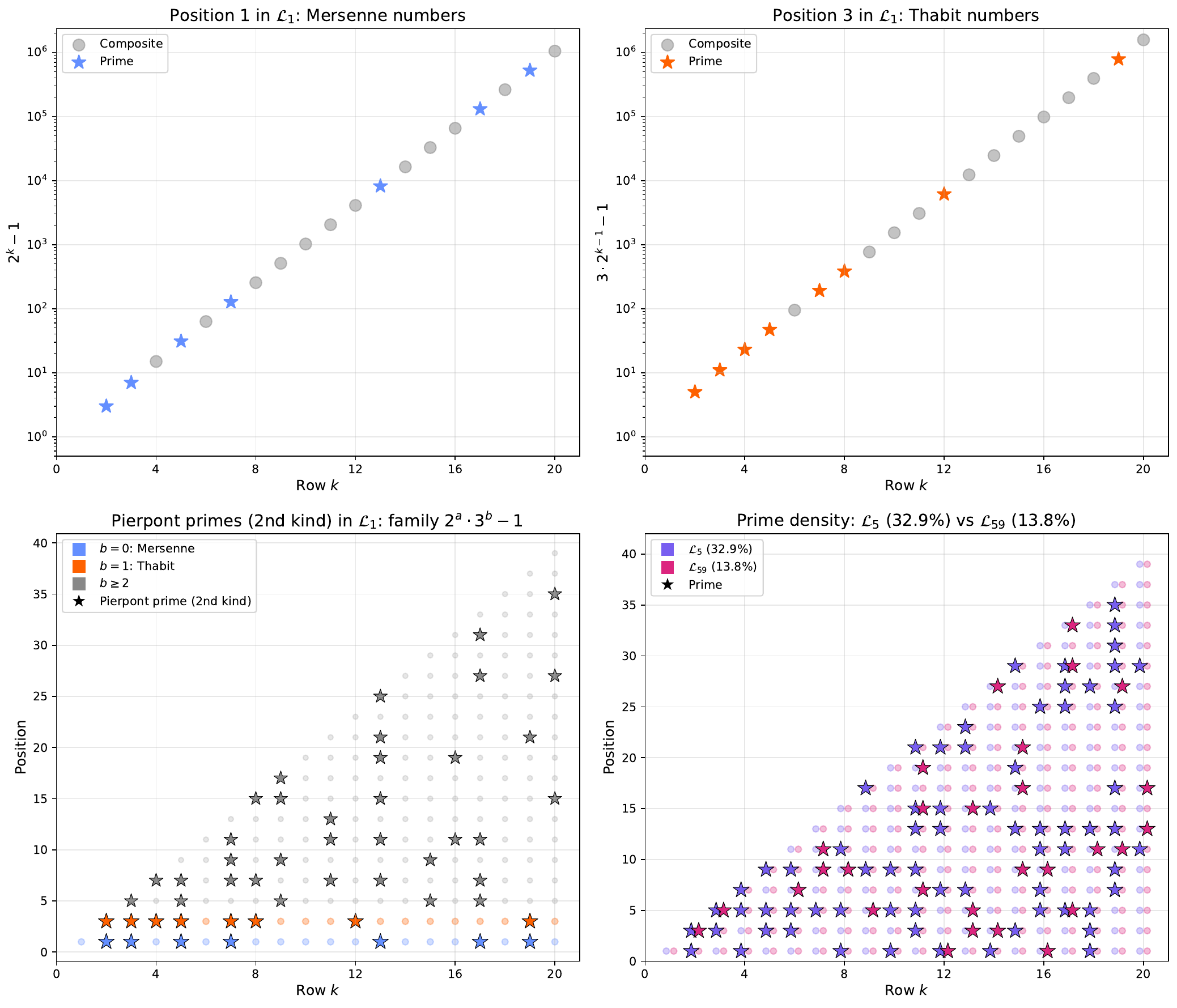}
    \caption{%
        \textbf{Top left:} Mersenne numbers $2^k - 1$ (position 1, i.e., $b = 0$, in $\Skel_1$), with primes starred.
        \textbf{Top right:} Thabit numbers $3 \cdot 2^{k-1} - 1$ (position 3, i.e., $b = 1$, in $\Skel_1$).
        \textbf{Bottom left:} All odd positions in $\Skel_1$, showing the Pierpont family $2^a \cdot 3^b - 1$. Position $2b + 1$ corresponds to exponent $b$. Blue indicates $b = 0$ (Mersenne), orange indicates $b = 1$ (Thabit), and gray indicates $b \geq 2$.
        \textbf{Bottom right:} Prime density comparison between $\Skel_5$ (purple, 32.9\%) and $\Skel_{59}$ (magenta, 13.8\%). Percentages indicate the fraction of odd-position elements that are prime, across rows $k = 1$ to $20$.
    }
    \label{fig:pierpont}
\end{figure}

The Mersenne numbers and Thabit numbers (see the Online Encyclopedia of Integer Sequences (OEIS)~\cite{OEIS} entries A000225 and A055010) are two classical sequences that appear as columns of skeleton $\Skel_1$ (see Figure~\ref{fig:pierpont}). Primes in $\Skel_1$ are exactly the Pierpont primes of the second kind (OEIS A005105). Thabit primes, in particular, appear in classical constructions of amicable number pairs \cite{Guy2004}.

Prior work on 3-smooth numbers studies their distribution and representation properties \cite{Blecksmith1998}. The crown triangle framework organizes the family $\{2^a \cdot 3^b - 1 : a \geq 1, b \geq 0\}$ by weight $a + b$. Our weight-based organization re-framed as a coordinate system indexing Collatz chains (this paper) appears not to have been studied previously.

\subsection{OEIS Connections}

The crown sequence has exact relationships to several OEIS sequences.

\begin{center}
\begin{tabular}{llll}
\toprule
OEIS & Definition & Relation to Crowns \\
\midrule
A007494 & $n \equiv 0, 2 \pmod 3$ & Crowns $= 4 \cdot \text{A007494}$ \\
A016777 & $n \equiv 1 \pmod 3$ & Complement of A007494 \\
A008594 & $n \equiv 0 \pmod{12}$ & Crowns $\equiv 0 \pmod{12}$ \\
A017557 & $n \equiv 8 \pmod{12}$ & Crowns $\equiv 8 \pmod{12}$ \\
A005105 & Pierpont primes (2nd kind) & Primes in $\Skel_1$ \\
\bottomrule
\end{tabular}
\end{center}

The sequences A007494 and A016777 partition $\Znn$. A007494 contains integers congruent to 0 or 2 modulo 3, while A016777 contains those congruent to 1 modulo 3. Crowns are precisely the multiples of 4 not in the range of the Collatz odd step $n \mapsto 3n + 1$.

\section{Computational Observations and Asymptotic Results}\label{sec:observations}

We record observations verified computationally but not yet proved. These suggest structure that may merit further investigation.

\subsection{Crown Visitation Asymmetry}
\begin{observation}[Visitation asymmetry]\label{obs:asymmetry}
For Collatz orbits starting from $n = 1, 2, \ldots, 10^7$, let each orbit contribute its set of distinct crown triangles visited. Averaged over the orbits for these starting values, 18.2 unique crown triangles are visited per orbit. Of these, 93.6\% are from the $c \equiv 8 \pmod{12}$ family and 6.4\% are from the $c \equiv 0 \pmod{12}$ family. This is despite the two crown families having equal density among the integers 0 to $10^7$.
\end{observation}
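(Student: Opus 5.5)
Since this is a computational observation rather than a theorem, the plan is to establish it by an exact, reproducible enumeration, using the earlier results only to make membership tests cheap and correct. For each integer $m$ reached by an orbit, we determine its crown triangle directly from the inverse maps. If $m$ is odd, Lemma~\ref{lem:inverse-odd} applies: strip all factors of $2$ and $3$ from $m+1$ to obtain $\lambda$. If $m$ is even, Lemma~\ref{lem:inverse-even} applies: strip all factors of $2$ and $3$ from $m/2+1$. The crown is then $c = 2\lambda - 2$. By Proposition~\ref{prop:lambda-bijection}, the family is read off from $\lambda \bmod 6$: $\lambda \equiv 1$ gives $c \equiv 0 \pmod{12}$, and $\lambda \equiv 5$ gives $c \equiv 8 \pmod{12}$. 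Theorem~\ref{thm:partition} guarantees that this assignment is well defined and unique, so the set of triangles visited by an orbit is simply the set of distinct $\lambda$ values along it.

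The key steps, in order, are as follows. First, fix the conventions the observation leaves implicit. The orbit of $n$ is $n, C(n), C^2(n), \ldots$ up to and including the first arrival at $1$; this is finite for every $n \le 10^7$ by the verification in~\cite{Barina2021}. We count $\Tri_0$ as visited, since $1 \in \Tri_0$. We also fix the percentage convention: the $93.6\%/6.4\%$ split is computed from pooled counts, namely the total number of (orbit, triangle) incidences in each family divided by the total incidences, rather than as a per-orbit average of ratios. Both conventions will be computed and reported, to check which one reproduces the stated figures.

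Second, run the enumeration for $n = 1, \ldots, 10^7$. For each orbit, accumulate the set of $\lambda$ values, add its size to a running total, and add its split by $\lambda \bmod 6$ to two family counters. The average $18.2$ is the total divided by $10^7$.

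Third, confirm the density claim. Crowns $c \equiv 0$ and $c \equiv 8 \pmod{12}$ in $[0, 10^7]$ number $\lfloor 10^7/12 \rfloor + 1$ and $\lfloor (10^7-8)/12 \rfloor + 1$ respectively, which differ by at most one. This is a direct count.

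The main obstacle is cost and correctness at scale. Orbits reach values far above $10^7$, so 64-bit arithmetic and overflow checks are needed. Storing every orbit's set naively is also wasteful. We will first try the straightforward approach: one pass per $n$, with a small hash set reset per orbit. The total work is about $10^7$ times the mean orbit length, roughly $10^9$ operations, which is feasible. Memoizing per-$n$ sets is not valid, because distinct-set sizes are not additive along shared tails, so we will avoid it.

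As an independent check, we will recompute on smaller ranges ($10^5$, $10^6$) to see that the averages stabilize. We will also cross-check the family split against the boundary mechanism of Proposition~\ref{prop:boundary}: exits from $a = 1$ land in the triangle of the odd part of $\lambda \cdot 3^{b+1} - 1$, and we will verify empirically that these predominantly have $\lambda' \equiv 5 \pmod 6$. This explains the asymmetry heuristically, but it is not offered as a proof.
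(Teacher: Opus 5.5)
Your proposal has a genuine gap: your enumeration computes a different statistic from the one behind the figures $18.2$ and $93.6\%/6.4\%$, so it cannot reproduce them. You count $\Tri_c$ as visited whenever some orbit element lies in it, using Lemmas~\ref{lem:inverse-odd} and~\ref{lem:inverse-even}. The paper (Section~\ref{sec:methods}, and the sentence after the observation) counts $\Tri_c$ as visited only when the orbit passes through the crown $c$ itself. It detects this by testing each orbit value for $\equiv 0$ or $8 \pmod{12}$, and credits crown $0$ at termination.

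The two notions differ sharply. For $n = 27$, the crown-hit set is $\lambda \in \{335, 1823, 2309, 245, 47, 41, 11, 5, 1\}$, and only $\lambda = 1$ lies in the $c \equiv 0$ family. Your membership set has $21$ elements, because the orbit also runs through rows of $\Skel_7, \Skel_{61}, \Skel_{23}, \Skel_{13}, \ldots$. Of those $21$, eleven have $\lambda \equiv 1 \pmod 6$.

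The same happens in general. By Theorem~\ref{thm:row-walking}, whenever $x$ and $C(x) = x/2$ are both even, $x$ must sit at a row end $j = 2k$. Apart from $4 \to 2$, the value $x/2$ then lies in a different triangle. Heuristically this occurs about once per odd step. Your run would therefore return an average far above $18.2$ with a nearly even family split. Choosing between pooled and per-orbit percentages cannot fix this. For the same reason, your cross-check through Proposition~\ref{prop:boundary} would fail, since boundary exits land in both classes $\lambda' \equiv 1, 5 \pmod 6$ about equally often.

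To reproduce the observation, replace membership with crown detection. Collect the orbit values $m \equiv 0$ or $8 \pmod{12}$, record $\lambda = (m+2)/2$, and add $\lambda = 1$ once per orbit for termination.

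The asymmetry then has a simple mechanism, which should replace your boundary heuristic. Crowns with $c \equiv 0 \pmod{12}$ are multiples of $3$. An orbit contains multiples of $3$ only in its initial run of halvings $n, n/2, n/4, \ldots$, because $3m+1 \equiv 1 \pmod 3$ and halving preserves $3 \nmid m$. Apart from crown $0$, an orbit therefore meets on average only about $\tfrac13 \cdot \tfrac12 = \tfrac16$ crowns of that family. This gives roughly $1.17/18.2 \approx 6.4\%$.

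The rest of your plan is sound: termination via~\cite{Barina2021}, the equal-density count, and the cost estimate.
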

The asymmetry arises because nearly all orbits pass through only one crown with $c \equiv 0 \pmod{12}$, namely crown~0 at termination, while visiting many different crowns with $c \equiv 8 \pmod{12}$ en route.

\subsection{The Case of $n=27$}\label{subsec:27}
The orbit of $n=27$ is famously anomalous~\cite{Lagarias2010, lagarias1985}. Starting from $n=27$, the Collatz trajectory reaches a maximum of 9232 before eventually descending to 1 after 111 steps. This behavior has puzzled investigators since the problem's introduction.

\newpage
In the crown framework, $n = 27$ has factorization $28 = 2^2 \cdot 7$, placing it at position $(2, 0)$ in row $k = 2$ of $\Skel_7$. The next odd integer in its orbit, 41 (with $42 = 2 \cdot 3 \cdot 7$), is also in $\Skel_7$, at the boundary position $(1, 1)$. From this boundary the orbit exits $\Skel_7$ and passes through a sequence of skeletons with much larger $\lambda$ values. We extend the skeleton assignment from odd integers to all integers in an orbit by the convention that each step inherits the $\lambda$ of the next crown reached. Under this convention, the orbit of 27 visits 9 distinct skeletons with $\lambda \in \{335, 1823, 2309, 245, 47, 41, 11, 5, 1\}$, reaching a maximum $\lambda = 2309$ before eventually descending to $\Skel_1$.

\begin{figure}[H]
    \centering
    \includegraphics[width=\textwidth]{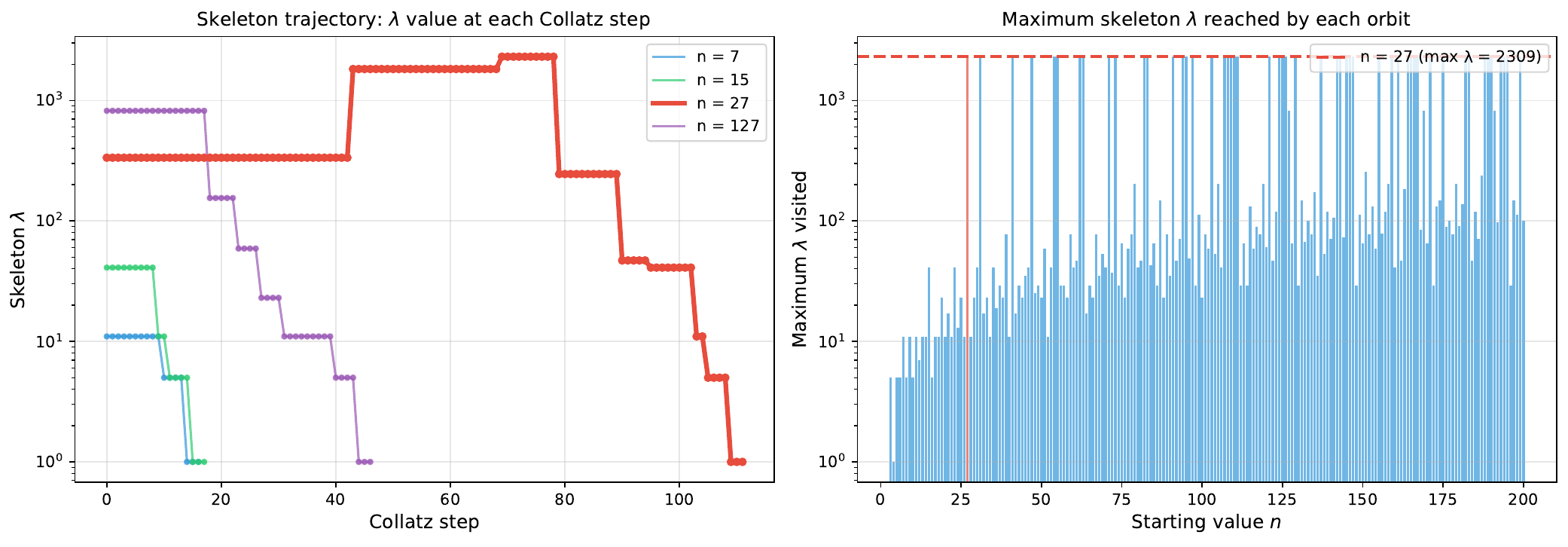}
    \caption{%
        \textbf{Left:} Skeleton $\lambda$ value at each Collatz step for selected starting values. The orbit of $n=27$ (red) reaches much higher $\lambda$ values compared with other starting $n$ values.
        \textbf{Right:} Maximum $\lambda$ visited by orbits starting from $n = 3, 4, \ldots, 200$. The value $n = 27$ (red bar) is a local anomaly, reaching $\lambda = 2309$.
    }
    \label{fig:orbit-27}
\end{figure}

This observation does not explain why the orbit of $n=27$ behaves anomalously compared to other values of $n$ (see Figure~\ref{fig:orbit-27}). However, it contextualizes the anomaly geometrically in terms of crown triangle structure and skeleton transitions.

\subsection{Chain Length Distribution}

The edge formulas from Lemma~\ref{lem:edges} explain the distribution of chain lengths.

\begin{proposition}[Chain count]\label{prop:chain-count}
Let $R_k(N)$ denote the number of Collatz chains of length $2k+1$ containing at least one element in $\{0, 1, \ldots, N\}$. For $k \geq 2$,
\[
R_k(N) = \frac{N}{6 \cdot 2^{k-1}} + O(1).
\]
\end{proposition}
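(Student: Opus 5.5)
The count reduces to the crown triangles $\Tri_c$. By Theorem~\ref{thm:partition} and Corollary~\ref{cor:chains}, together with maximality, the Collatz chains of length $2k+1$ are exactly the rows $k$ of these triangles. I would still check maximality directly, since the partition already shows that each row is a full chain. So $R_k(N)$ counts the crowns $c\in\Crown$ whose row $k$ meets $[0,N]$.

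\textbf{Step 1 (minimum of a row).} I expect the row minimum to be the left edge $\Tri_c(k,0)=2(2^k\lambda-1)$. For even $j$ the value $2(\lambda R(k,j)-1)$ increases with $R(k,j)$, and $R(k,0)=2^k$ is the smallest $R$ value. For odd $j$ the value $\lambda R(k,j)-1$ is at least $2^k\lambda-1$. This is smaller than the left edge, so the true minimum is the odd entry $\Tri_c(k,1)=2^k\lambda-1$. Either way the row meets $[0,N]$ iff $2^k\lambda-1\le N$, i.e.\ iff $\lambda\le (N+1)/2^k$.

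\textbf{Step 2 (counting $\lambda$).} By Proposition~\ref{prop:lambda-bijection}, crowns correspond bijectively to positive $\lambda$ coprime to $6$. The number of such $\lambda\le X$ is $X/3+O(1)$. Hence
\[
R_k(N)=\frac{N+1}{3\cdot 2^k}+O(1)=\frac{N}{6\cdot 2^{k-1}}+O(1).
\]
The $O(1)$ term is uniform in $k$, since the counting error for residues $1,5 \pmod 6$ is at most $2$. The restriction $k\ge 2$ only matters for excluding degenerate small rows, such as the crown row $k=0$ of length $1$ and the overlap for $c=0$ at $k=1$.

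\textbf{Main obstacle.} The substantive point is justifying that chains of length $2k+1$ are exactly rows of length $k$, so that no other maximal alternating sequences exist. Given the partition theorem, I would argue as follows. Any maximal alternating chain containing $n$ must contain the whole row through $n$ by Theorem~\ref{thm:row-walking}. It cannot extend past the endpoints, because the predecessor of $\Tri_c(k,0)$ under an alternating step would be odd with $3m+1=\Tri_c(k,0)$; this requires $\Tri_c(k,0)\equiv 4\pmod 6$ with an odd preimage. Similarly, $C(\Tri_c(k,2k))=\lambda3^k-1$ is even, which breaks alternation.

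Ruling out the odd preimage at the left edge is the part I expect to need the most care. The left edge is $2^{k+1}\lambda-2$, and $\lambda\equiv\pm1\pmod 6$ makes it $\equiv\pm2^{k+1}-2\pmod 6$, which can equal $4$. So instead I would show that the preimage $m=(2^{k+1}\lambda-3)/3$ is non-integral: $3\nmid 2^{k+1}\lambda$, hence $3\nmid 2^{k+1}\lambda-3$. This gives maximality at both ends.
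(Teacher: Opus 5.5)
Your argument is correct and is essentially the paper's proof. The row minimum is the odd entry $\Tri_c(k,1)=2^k\lambda-1$, so $R_k(N)$ counts the $\lambda\le (N+1)/2^k$ coprime to $6$, giving $(N+1)/(3\cdot 2^k)+O(1)$; it does matter that the minimum is not the left edge, contrary to your ``either way'', since that would halve the count. Your maximality check is a point the paper simply takes for granted from Corollary~\ref{cor:chains} and Theorem~\ref{thm:partition}.

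One aside is false: the left edge $2^{k+1}\lambda-2$ is always $\equiv 0$ or $2 \pmod 6$, never $4$, precisely because $3\nmid 2^{k+1}\lambda-3$. The divisibility argument you actually rely on is correct.
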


\begin{proof}
Row $k$ of $\Tri_c$ intersects $[0, N]$ when its minimum element $2^k\lambda - 1 \leq N$, giving $\lambda \leq (N+1)/2^k$. Since $\lambda$ ranges over integers coprime to 6, the count of such $\lambda$ is $(N+1)/(3 \cdot 2^k) + O(1)$, giving the result.
\end{proof}

The ratio $R_k(N)/R_{k+1}(N) \to 2$ as $N \to \infty$, so the number of chains roughly halves with each increase in $k$. Computational verification for $N$ up to $10^7$ and $k \in \{2, 3, 4, 5\}$ confirms this.

\subsection{Accidentally Prime-Free Rows}

The Complete Covering Theorem (Theorem~\ref{thm:complete-covering}) characterizes \emph{algebraically} prime-free rows, meaning those where every position is covered by the difference-of-squares or divisibility-by-5 obstruction. However, computational verification reveals additional prime-free rows that escape algebraic explanation. 

\begin{observation}[Accidentally prime-free rows]\label{obs:accidental}
Among rows with $2 \leq k \leq 300$ in $\Skel_1$, some rows contain prime-admissible positions but no actual primes.
\begin{itemize}
    \item For $k \equiv 0 \pmod 4$, the prime-free rows are $84, 100, 116, 156, 176, 184, 188, 200, 252, 284, 300$, comprising 14.7\% of such rows.
    \item Among odd rows $k \ge 3$ the prime-free rows are $149, 165, 261$, comprising $3/149 \approx 2.0\%$ of such rows.
    \item $k=1$ is excluded as its single element is 1, which is neither prime nor composite.
\end{itemize}
We call these rows \emph{accidentally prime-free} to distinguish them from the \emph{algebraically prime-free} rows with $k \equiv 2 \pmod 4$ and $k \geq 6$.
\end{observation}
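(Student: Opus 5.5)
Because this is a computational observation rather than a theorem, the plan is to turn the computation into a \emph{certified} enumeration, so that every row on the list is provably prime-free and every row off the list provably contains a prime. The row range is $2 \leq k \leq 300$, and the elements of row $k$ of $\Skel_1$ are $n_{a,b} = 2^a 3^b - 1$ with $a + b = k$ and $a \geq 1$. By Theorem~\ref{thm:complete-covering}, only prime-admissible positions need attention in rows $k \equiv 0 \pmod 4$ and in odd rows. Every other position is composite by the difference-of-squares factorization or by divisibility by $5$, provided $n_{a,b} > 5$. This holds for all $k \geq 3$, so I would not recheck those positions.

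For each admissible position I would first trial-divide by small primes, then run a Fermat test to base $2$ or $3$ (or a strong test). A small factor, or a failed Fermat congruence, is a rigorous certificate of compositeness.

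For a number that passes, I would exploit that $n_{a,b} + 1 = 2^a 3^b$ is completely factored. An $N+1$ primality test (Morrison's theorem, the Lucas-sequence analogue of Pocklington, generalizing the Lucas--Lehmer test for the Mersenne column $b=0$) then gives a deterministic proof of primality. It only requires a Lucas sequence $U_m(P,Q)$ with suitable discriminant $D$, with $(D/n) = -1$, such that $n$ divides $U_{n+1}$ but is coprime to $U_{(n+1)/2}$ and $U_{(n+1)/3}$. The numbers involved have at most about $\log_2 3^{299} \approx 474$ bits, so this is cheap.

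Each row then gets one of two labels. It is \emph{prime-containing} as soon as one admissible position is certified prime. It is \emph{prime-free} when every admissible position carries a compositeness certificate.

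Finally I would tabulate the results and check the percentages. The rows $k \equiv 0 \pmod 4$ with $4 \leq k \leq 300$ number $75$, so the eleven listed rows give $11/75 \approx 14.7\%$. The odd rows with $3 \leq k \leq 299$ number $149$, so three rows give $3/149 \approx 2.0\%$. Row $k=1$ is excluded because its only element is $1$.

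The main obstacle is not the arithmetic but rigor at the boundary between the two labels. A prime-free verdict needs every admissible position to be certifiably composite, which Fermat witnesses handle. A prime-containing verdict needs a genuine primality proof rather than a probable-prime flag, and in some rows the only surviving candidates might be large. I expect the $N+1$ test to settle this uniformly. If it stalls for some $D$, I would iterate over small $D$ until the Jacobi symbol condition holds, which happens quickly.
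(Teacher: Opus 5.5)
Your approach is correct, but it is more rigorous than the paper's, and the extra rigour lands exactly where it is needed. The paper tests every element of every row $k\le 300$ of $\Skel_1$. It uses deterministic Miller--Rabin below $2^{64}$ and \texttt{sympy.isprime} (a BPSW-type probable-prime test) above, and calls a row prime-free when no element is flagged prime. Neither test ever rejects a true prime, so the paper's prime-free verdicts are already rigorous. What the paper does not certify is the complementary half of the statement: that every row \emph{off} the list contains a prime. That half is what makes the lists and the ratios $11/75$ and $3/149$ exact, and for large entries it rests only on BPSW having no known counterexample. Your Morrison $N+1$ test, which the complete factorization $n+1=2^a3^b$ makes available, closes exactly that gap. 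Restricting to the prime-admissible positions of Theorem~\ref{thm:complete-covering} is a harmless economy that the paper does not bother with.

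There are two implementation cautions.

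First, do not use base $2$ on the Mersenne column $b=0$ of the odd rows. If $k$ is prime and $2^k-1$ is composite, then $2^k\equiv 1 \pmod{2^k-1}$ and $k\mid 2^{k-1}-1$. So $2^k-1$ is a strong base-$2$ pseudoprime, and neither a Fermat nor a strong test to base $2$ can ever certify it composite. Row $149$, with $149$ prime, is one of the listed prime-free rows, so this case actually arises. Use base $3$ there, or Lucas--Lehmer, which decides primality in both directions.

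Second, consider a prime $n$ with $(D/n)=-1$ and $\gcd(n,Q)=1$. Then $n\mid U_{n+1}$ holds automatically. Coprimality to $U_{(n+1)/2}$ and $U_{(n+1)/3}$, however, requires the root ratio $\alpha/\beta$ to have full order $n+1$. This happens for only a fraction $\varphi(n+1)/(n+1)$ of the choices of $P$ at fixed $D$, roughly one third when $b\ge 1$. So you must vary $P$ (or $Q$), not merely search over $D$ for the right Jacobi symbol. In the other direction, $(D/n)=-1$ together with $\gcd(n,Q)=1$ and $n\nmid U_{n+1}$ is itself a compositeness certificate. This disposes of any Fermat pseudoprime that slips past your first filter.
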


The existence of accidentally prime-free rows demonstrates that algebraic admissibility does not guarantee prime existence. This is consistent with general heuristics. For large integers, primality is rare, and a row with $k$ elements of size roughly $2^k$ has expected prime count approximately $k/(k \ln 2) = 1/\ln 2 \approx 1.44$ by the prime number theorem. As $k$ increases, this expectation remains roughly constant while the variance increases, so prime-free rows should occur with positive probability.

The distinction between algebraically and accidentally prime-free rows is significant. The former admit complete characterization (Corollary~\ref{cor:unique-covering}), while the latter appear governed by probabilistic considerations beyond the two identified obstructions.

\subsection{Computational Methods}\label{sec:methods}

We describe here the computational infrastructure underlying the observations and figures in this section. All computations were carried out in Python 3. Primality testing uses a deterministic Miller-Rabin implementation with the witness set $\{2, 3, 5, 7, 11, 13, 17, 19, 23, 29, 31, 37\}$, which gives certified primality for all integers below $2^{64}$ \cite{SorensonWebster2017}. The larger elements arising in Theorem~\ref{thm:zero-prime} and Observation~\ref{obs:accidental}, which exceed this bound, are classified using \texttt{sympy.isprime}, as described below. The prime counts in Figure~\ref{fig:zero-prime} and Figure~\ref{fig:pierpont} were computed with \texttt{sympy.isprime}, which returns deterministic results at the sizes appearing in those figures.

Chain enumeration for Figure~\ref{fig:chain-lengths} and Proposition~\ref{prop:chain-count} uses the explicit row formula $\Tri_c(k, 0) = 2(2^k \lambda - 1)$ from Lemma~\ref{lem:edges}. For each crown $c$ with minimum element in $[0, N]$, the row is generated by iterating the Collatz map from $\Tri_c(k, 0)$ for $2k$ steps. The distinction between chains fully contained in $[0, N]$ and chains with at least one element outside this interval is tracked by comparing the maximum element $\Tri_c(k, 2k) = 2(3^k \lambda - 1)$ against $N$. Enumeration runs to $N = 10^7$.

The crown visitation statistics in Observation~\ref{obs:asymmetry} were computed by generating full Collatz orbits for each starting value $n = 1, 2, \ldots, 10^7$ and recording the set of distinct crown triangles touched along each orbit. A crown triangle is identified by checking the residue $n \bmod 12$ at each step. The reported averages and percentages are aggregated over all $10^7$ starting orbits.

The skeleton trajectory in Section~\ref{subsec:27} and Figure~\ref{fig:orbit-27} uses the extended skeleton assignment described earlier, where each integer in an orbit inherits the $\lambda$ value of the next crown reached. This is implemented by walking forward from each orbit step until a value $c \equiv 0$ or $8 \pmod{12}$ is encountered, then computing $\lambda = (c+2)/2$. This extension is used only for visualization of skeleton trajectories.

The prime-admissible position counts reported after Corollary~\ref{cor:unique-covering} were verified directly by enumerating all positions $(a, b)$ with $a + b = k$, $a \geq 1$, for $k = 1, \ldots, 42$ and checking each against the difference-of-squares and divisibility-by-5 obstructions from Theorem~\ref{thm:complete-covering}. The observed counts match the theoretical predictions exactly.

The accidentally prime-free rows reported in Observation~\ref{obs:accidental} were identified by testing each element of each row $k \leq 300$ in $\Skel_1$ for primality. The largest element tested is $2 \cdot 3^{299} - 1$, with approximately 143 decimal digits. Elements below $2^{64}$ are certified prime or composite by the deterministic Miller-Rabin test, while the larger elements, which exceed that bound, are classified using \texttt{sympy}'s \texttt{isprime} function, which combines strong Miller-Rabin and Lucas tests and has no known counterexamples. For each row with $k \not\equiv 2 \pmod 4$, we record the row as accidentally prime-free if no element is prime.

Code for all computations and figures is available at \url{https://github.com/rhoposit/collatz-coordinate-system}.

\section{Discussion and Open Problems}\label{sec:discussion}

We have introduced a coordinate system for nonnegative odd integers based on 3-smooth factorization. The crown triangle partition organizes all nonnegative integers into arrays whose rows are Collatz chains. Each crown triangle can be expressed as a skeleton consisting of only the odd elements. The coordinate system reframes Collatz dynamics as diagonal flow within skeletons, making the local structure explicit and isolating the number-theoretic difficulty at boundary transitions.

As a concrete application, we proved that rows $k \equiv 2 \pmod 4$ with $k \geq 6$ in the principal skeleton $\Skel_1$ contain no primes. The Complete Covering Theorem shows this is the unique residue class admitting complete algebraic obstruction, providing a full characterization of which row classes can be proven prime-free by elementary divisibility arguments.

The coordinate system is new. The algebraic identities underlying the proofs are elementary. The contribution of this paper is the framework that makes the geometric structure visible and enables these characterization results.

We summarize here several open problems extending our work:

\begin{enumerate}

\item \textbf{Row-exit dynamics.} When a trajectory exits skeleton $\Skel_\lambda$ at the boundary, which skeleton does it enter? Each individual transition is elementary to compute, but the map sending one skeleton to the next has no known closed form. A complete characterization of this transition map would give a description of Collatz dynamics in skeleton coordinates.

\item \textbf{Why is $p = 3$ special?} The $3n+1$ map admits a 3-smooth coordinate system because multiplying by 3 and adding 1 transforms $\lambda \cdot 2^a \cdot 3^b - 1$ into $2(\lambda \cdot 2^{a-1} \cdot 3^{b+1} - 1)$. This preserves $\lambda$ while shifting $(a, b)$ to $(a-1, b+1)$, giving a diagonal flow within each skeleton for $a \ge 2$. The operative identity is $3-1 = 2$. For the general $pn+1$ map, write $n = \lambda\cdot 2^a\cdot p^b - 1$. Then $pn+1 = \lambda\cdot 2^a\cdot p^{b+1}-(p-1)$, and the subtracted term $p-1$ is a power of $2$ only when $p-1 = 2^c$, and $p=3$ is the unique case with $c=1$, which preserves the weight $a+b$. Whether alternative coordinate systems have a role in characterizing $pn+1$ dynamics is unknown.

\item \textbf{$\lambda$-recurrence.} Computationally, every trajectory we tested (starting from $n = 1, 2, \ldots, 10^7$) eventually reaches some $n$ with $\lambda(n) = 1$. This is implied by the Collatz conjecture, since the cycle $1 \to 4 \to 2 \to 1$ lies in $\Tri_0$ with $\lambda = 1$. But proving that every trajectory reaches $\lambda = 1$ (call this $\lambda$-recurrence) might be easier than proving full convergence. Can the number of steps to reach $\lambda = 1$ be bounded?

\item \textbf{Statistical structure.} Tao \cite{Tao2022} showed that almost all orbits attain almost bounded values. Can the skeleton framework provide a geometric interpretation of this result?

\item \textbf{Parity vectors and lattice paths.} Knight \cite{Knight2026} characterizes Collatz cycles using parity vectors, which record sequences of odd and even steps. In skeleton coordinates, odd steps correspond to diagonal moves $(a, b) \to (a-1, b+1)$. The relationship between parity vector combinatorics and skeleton geometry has not been investigated.

\item \textbf{Accidentally prime-free rows.} Observation~\ref{obs:accidental} identifies rows in $\Skel_1$ that contain prime-admissible positions but no primes. Is the set of such rows finite or infinite? What is the asymptotic density within each residue class? For $k \equiv 0 \pmod 4$, approximately 15\% of rows up to $k = 300$ are accidentally prime-free; does this proportion persist, vanish, or converge to a positive limit?

\end{enumerate}

The results of this paper stand independent of the Collatz conjecture. The partition theorem organizes all nonnegative integers by a 3-smooth coordinate system. The Complete Covering Theorem characterizes prime-free rows in the Pierpont family $\{2^a \cdot 3^b - 1\}$, a result in the classical study of primes in shifted smooth number families. The coordinate system unifies Mersenne numbers, Thabit numbers, and their generalizations into a single geometric framework. Whether this organization yields further insight into Collatz dynamics remains open, but the structure it reveals may be of interest more widely.
\section*{Acknowledgments}

The author would like to express sincere thanks to Marc Chamberland for insightful discussions and encouragement during the early stages of this work. The author is also grateful to Michaeel Kazi, Mykel Kochenderfer, Michael Brandstein, Ramona Comanescu, Catherine Lai, Alireza Goudarzi, and Poppy Welch for their support, and to Henna Chhohan and William Toner for reviewing early drafts of this manuscript. The author is especially grateful to the University of Southampton Pure Mathematics Group for their helpful discussions, critiques, and curiosities during the final stages of this work, and in particular to Ian Leary for spotting a minor error.

\section*{Funding Statement}
No funding was obtained for the reported work.

\section*{Disclosure Statement}
The authors report there are no competing interests to declare.

\section*{Declaration of Generative AI Use}
No artificial intelligence tools were used for the underlying conceptual ideation, core hypothesis formulation, or foundational discovery of this work. The generative AI tool Claude (Anthropic, Version: Claude Opus 4.8) was used substantially during this work to assist in developing and drafting proof arguments and structures, which the author critically checked, revised, and verified; to assist with writing computational scripts; and to format text. Every mathematical proof was thoroughly scrutinized by the author, and any logical or computational errors introduced by the tool were manually identified and corrected. In strict accordance with publisher policies, all AI-assisted content underwent rigorous human revision, testing, and validation. The author compiled and verified all code pipelines locally to ensure data integrity, and assumes sole academic responsibility for the mathematical accuracy, validity, and integrity of the final manuscript and its replication materials.

\section*{Data Availability Statement}
The code supporting the findings of this study is openly available at \url{https://github.com/rhoposit/collatz-coordinate-system}.

\bibliographystyle{tfnlm}

\begin{thebibliography}{99}

\bibitem{Applegate1995}
D.~Applegate and J.~C. Lagarias.
\newblock Density bounds for the $3x+1$ problem. {I}. {T}ree-search method.
\newblock {\em Mathematics of Computation}, 64(209):411--426, 1995.

\bibitem{Barina2021}
D.~Bařina.
\newblock Convergence verification of the {C}ollatz problem.
\newblock {\em The Journal of Supercomputing}, 77(3):2681--2688, 2021.

\bibitem{BatemanHorn1962}
P.~T.~Bateman and R.~A.~Horn. A heuristic asymptotic formula concerning the distribution of prime numbers. \textit{Mathematics of Computation}, 16(79):363--367, 1962.

\bibitem{Blecksmith1998}
R.~Blecksmith, M.~McCallum, and J.~L.~Selfridge.
\newblock 3-smooth representations of integers.
\newblock {\em American Mathematical Monthly}, 105(6):529--543, 1998.

\bibitem{Flatto1992}
L.~Flatto. {Z}-numbers and $\beta$-transformations. \textit{Symbolic Dynamics and Its Applications} (P.~Walters, ed.), vol.~135 of \textit{Contemporary Mathematics}, pp.~181--201, American Mathematical Society, 1992.

\bibitem{Guy2004}
R.~K.~Guy. \textit{Unsolved Problems in Number Theory}, 3rd~ed., Problem Books in Mathematics, vol.~1. Springer, New York, 2004.

\bibitem{Kannan2016}
T.~Kannan and C.~Ganesa~Moorthy. Collatz Conjecture for Modulo an Integer. \textit{International Journal of Mathematics and its Applications}, 4(3-A):41--61, 2016.

\bibitem{Knight2026}
K.~Knight.
\newblock Collatz high cycles do not exist.
\newblock {\em Discrete Mathematics}, 349:114812, 2026.

\bibitem{lagarias1985}
J.~C.~Lagarias. The $3x+1$ problem and its generalizations. \textit{The American Mathematical Monthly}, 92(1):3--23, 1985.

\bibitem{Lagarias2010}
J.~C.~Lagarias. \textit{The Ultimate Challenge: The $3x+1$ Problem}. American Mathematical Society, 2010.

\bibitem{LagariasWeiss1992}
J. C. Lagarias and A. Weiss. The $3x+1$ problem: Two stochastic models. \emph{Annals of Applied Probability}, 2(1):229--261, 1992.

\bibitem{Mahler1968}
K.~Mahler. An unsolved problem on the powers of 3/2. \textit{Journal of the Australian Mathematical Society}, 8(2):313--321, 1968.

\bibitem{OEIS}
{OEIS Foundation Inc.}, ``The On-Line Encyclopedia of Integer Sequences,'' \url{https://oeis.org}, 2024.

\bibitem{Ren2023}
W.~Ren. Collatz dynamics is partitioned by residue class regularly. \textit{Research in Mathematics}, 10(1):2269657, 2023.

\bibitem{SorensonWebster2017}
J.~Sorenson and J.~Webster.
\newblock Strong pseudoprimes to twelve prime bases.
\newblock {\em Mathematics of Computation}, 86(304):985--1003, 2017.

\bibitem{Tao2022}
T.~Tao.
\newblock Almost all orbits of the {C}ollatz map attain almost bounded values.
\newblock {\em Forum of Mathematics, Pi}, 10:e12, 2022.

\bibitem{terras1976stopping}
R.~Terras. A stopping time problem on the positive integers. \textit{Acta Arithmetica}, 30:241--252, 1976.

\bibitem{Wirsching1998}
G.~Wirsching. \textit{The Dynamical System Generated by the $3n+1$ Function}, Lecture Notes in Mathematics, No.~1681. Springer-Verlag: Berlin, 1998.

\end{thebibliography}

\end{document}